\title{Quadratic maps between non-abelian groups}
\author{Asgar Jamneshan}
\address{Asgar Jamneshan, University of Bonn, 53115 Bonn}
\email{ajamnesh@math.uni-bonn.de}
\author{Andreas Thom}
\address{Andreas Thom, TU Dresden, 01062 Dresden} 
\email{andreas.thom@tu-dresden.de}
\theoremstyle{plain}
\newtheorem{theorem}{Theorem}[section]
\newtheorem{definition}[theorem]{Definition}
\newtheorem{proposition}[theorem]{Proposition}
\newtheorem{lemma}[theorem]{Lemma}
\newtheorem{corollary}[theorem]{Corollary}
\theoremstyle{definition}
\newtheorem{remark}[theorem]{Remark}
\newtheorem{example}[theorem]{Example}
\newcommand{\beq}{\begin{equation}}
\newcommand{\eeq}{\end{equation}}
\newcommand{\beqn}{\begin{equation*}}
\newcommand{\eeqn}{\end{equation*}}
\newcommand{\brq}{\begin{dmath}[compact]}
\newcommand{\erq}{\end{dmath}}
\newcommand{\brqn}{\begin{dmath*}[compact]}
\newcommand{\erqn}{\end{dmath*}}
\newcommand{\bag}{\begin{align}}
\newcommand{\eag}{\end{align}}
\newcommand{\bagn}{\begin{align*}}
\newcommand{\eagn}{\end{align*}}
\newcommand{\vertiii}[1]{{|\kern-0.2ex|\kern-0.2ex| #1 
    |\kern-0.2ex|\kern-0.2ex|}}
\begin{document}
\begin{abstract}
Gowers and Hatami initiated the inverse theory for the uniformity norms $U^k$ of matrix-valued functions on non-abelian groups by proving a $1\%$-inverse theorem for the $U^2$-norm and relating it to stability questions for almost representations.  In this article, we take a step toward an inverse theory for higher-order uniformity norms of matrix-valued functions on arbitrary groups by examining the  $99\%$ regime for the $U^k$-norm on perfect groups of bounded commutator width.

This analysis prompts a classification of Leibman’s quadratic maps between non-abelian groups.  Our principal contribution is a complete description of these maps via an explicit universal construction.  From this classification we deduce several applications: A full classification of quadratic maps on arbitrary abelian groups; a proof that no nontrivial polynomial maps of degree greater than one exist on perfect groups; stability results for approximate polynomial maps. 
\end{abstract}

\maketitle

\tableofcontents

\section{Introduction}

Let $G$ be a finite group and let $k\ge1$ be an integer.  For any matrix-valued function $f\colon G\to M_n \mathbb C$, define the difference operator
$$
(\Delta_g f)(h)\;=\;f(gh)\,f(h)^*,
$$
and iterate to
$$
\Delta_{g_1,\dots,g_k}
=\;\Delta_{g_1}\circ\cdots\circ\Delta_{g_k}.
$$
The Gowers $U^k$-norm of $f$ is then given by
$$
\|f\|_{U^k}^{2^k}
\;=\;
\mathbb{E}_{g_0,\dots,g_k\in G}
\mathrm{tr}\bigl(\Delta_{g_1,\dots,g_k}f(g_0)\bigr),
$$
where $\mathrm{tr}$ denotes the normalized trace on $M_n \mathbb C$ and $\mathbb{E}$ the expectation with respect to the uniform distribution on $G^{k+1}$.  Basic properties of these matrix-valued Gowers norms - including the fact that $\|\cdot\|_{U^k}$ is indeed a norm - are collected in Appendix \ref{app:gowers}.

Higher-order Fourier analysis is built around the study of these Gowers uniformity norms and, in particular, the inverse problem: given a function $f$ with\footnote{For a function $\varphi \colon G \to M_n \mathbb{C}$, we write $\|\varphi\|_{\infty} := \sup_{g \in G} \|\varphi(g)\|$, where $\|\cdot\|$ denotes the usual operator norm of a matrix.} $\|f\|_\infty\le1$ and a large $U^k$-norm, what algebraic or analytic structure must $f$ exhibit?  This inverse question depends both on the degree $k$ of the norm and on the algebraic properties of the ambient group.  The theory of Gowers norms - and their inverse theorems - originated in Gowers’s Fourier-analytic proof of Szemerédi’s theorem for scalar-valued functions on the cyclic groups $\mathbb{Z}/n\mathbb{Z}$ \cite{g1}, and has since been developed chiefly for scalar-valued functions on finite abelian groups \cites{gtz,jt,leng,luka,tz1,tz2}.  Moreover, even in the abelian scalar-valued setting, the inverse theory remains far from complete (cf.\ \cite{jt}*{Conjecture 1.11}).

Gowers and Hatami~\cite{MR3733361} initiated the study of noncommutative Gowers norms by proving a quantitative inverse theorem for matrix-valued Gowers $U^2$-norms on nonabelian finite groups (see~\cite{MR3733361}*{Theorem 5.6}).  To illustrate, in the classical abelian scalar-valued setting, if $G$ is a finite abelian group, $f\colon G\to\mathbb{C}$ satisfies $\lvert f\rvert\le1$, and $\|f\|_{U^2}\ge\delta>0$, then Plancherel’s identity guarantees a character $\chi\colon G\to S^1$ with $|\langle f,\chi\rangle|\ge\delta^2$. In the noncommutative, matrix-valued case, characters are replaced by unitary representations, and the quantitative bounds depend on the dimensions of those representations as well.  

Gowers and Hatami further related their inverse theorem to stability questions for approximate representations in group theory.  Using operator-theoretic methods, these $U^2$-inverse and stability results were subsequently re-proved and extended to functions on countable amenable groups taking values in von Neumann algebras \cite{MR3867328}.

This article takes a step beyond the linear (first‐order) inverse Gowers theory by investigating the quadratic (second‐order) noncommutative Gowers inverse problem and its connection to higher‐order stability phenomena.  To initiate the study of the inverse theory for $\|f\|_{U^3}$ when $f\colon G\to M_n \mathbb C$ satisfies $\|f\|_\infty\le1$, it is natural to begin with the extremal case, namely $\|f\|_{U^3}=1$ ($100 \%$ regime).  Indeed, by definition,
$$
\|f\|_{U^3}^{8} \;=\; \mathbb{E}_{g_0,g_1,g_2,g_3} \mathrm{tr}\bigl(\Delta_{g_1,g_2,g_3}f(g_0)\bigr),
$$
so $\Delta_{g_1,g_2,g_3}f(g_0)=I_n$ for all $g_0,g_1,g_2,g_3\in G$ is equivalent to $\|f\|_{U^3}=1$.  Consequently, understanding the $100 \%$ regime amounts to classifying those functions $f\colon G\to M_n \mathbb C$ for which $\Delta_{g_1,g_2,g_3}f(g_0)=I_n$ for all $g_0,g_1,g_2,g_3\in G$. If we assume that $f$ takes unitary values, then such functions are precisely quadratic maps in the sense of Leibman, and, in the special case where $G$ is abelian and $f$ is scalar‐valued, they coincide with the non-classical phase polynomials that play a central role in the inverse theory of scalar‐valued Gowers norms on abelian groups \cites{jst,tz1,tz2}. 

Let $G$ and $H$ be groups. For a map $\varphi \colon G \to H$ and an element $k \in G$, define the \emph{finite difference} by
\[
(\Delta_k \varphi)(g) := \varphi(kg)\, \varphi(g)^{-1}.
\]

\begin{definition}\label{def-poly}
A map $\varphi \colon G \to H$ is said to be \emph{polynomial of degree~$-1$} if $\varphi(g) = 1$ for all $g \in G$.  
For $d \geq -1$, the map $\varphi$ is said to be \emph{polynomial of degree~$d+1$} if, for every $k \in G$, the finite difference $\Delta_k \varphi$ is polynomial of degree $d$.
\end{definition}

The notion of polynomial maps has emerged as a fundamental concept in the field of higher-order Fourier analysis. For example, complex functions of polynomial maps between filtered groups are believed to constitute the general obstructions to uniformity in the inverse theory of the Gowers norms over finite abelian groups~\cite{jt}. This belief has been confirmed in the setting of cyclic groups of prime order~\cite{gtz}, finite vector spaces~\cites{tz1,tz2}, and for the $U^3$-norm on general finite abelian groups~\cite{jt}. 

It is of interest to obtain algebraic descriptions and classifications of polynomial maps between groups. For classical polynomials over fields, a natural description is given by linear combinations of monomials, which can be derived from Definition~\ref{def-poly} using a Taylor expansion. In a similar vein, for polynomial maps from $\mathbb{Z}^d$ into a not necessarily commutative filtered group $G$, a kind of Taylor expansion is available; see, for instance,~\cite{gtz}*{Lemma B.9}. 

In the setting of maps from $k^n$, where $k$ is a finite field, into the torus $S^1$, a classification in terms of linear combinations of monomials divided by certain powers of the characteristic of $k$ was established in~\cite{tz2}*{Lemma 1.7}. Such maps, also known as \emph{non-classical polynomials}, take values in the group of roots of unity of order equal to a power of the characteristic, where the power depends on the degree. This contrasts with classical polynomials, whose values lie in the group of roots of unity of order equal to the characteristic itself.

However, even for polynomial maps from an arbitrary finite abelian group into the torus, no algebraic description or classification is currently known. The main result of this article presents a universal construction of polynomial maps between arbitrary groups and, in the case of quadratic maps, to compute this construction explicitly in group-theoretic terms. 

We now describe our results in more detail. It is  clear from the Definition \ref{def-poly} that the composition of a polynomial of degree $d$ with a homomorphism in either order is again a polynomial map of degree $d$. It also follows easily that polynomial maps of degree $0$ are constant functions, and that polynomial maps of degree $1$ are the product of a homomorphism and a constant function (see Lemma~\ref{lem:affine}). The primary focus of this paper is to study the first non-trivial case: quadratic maps, i.e., polynomials of degree at most two. It also follows directly from the definition that if $\varphi \colon G \to H$ is a quadratic map, then the map $\varphi' \colon G \to H$ defined by $\varphi'(g) = \varphi(g)h$, for some fixed $h \in H$, is also quadratic. Consequently, without loss of generality, we may assume that $\varphi(1) = 1$. We call a map $\varphi \colon G \to H$ \emph{unital} if it satisfies $\varphi(1) = 1$, and denote by ${\rm Quad}(G,H)$ the set of unital quadratic maps from $G$ to $H$. 
For a fixed group $G$, we can then interpret ${\rm Quad}(G,?)$ as a functor from the category of groups to the category of sets. Note that if $H$ is abelian, then ${\rm Quad}(G,H)$ is naturally an abelian group as well.

Our main result provides a characterization of quadratic maps in terms of group homomorphisms, as follows:

\begin{theorem} \label{thm:func}
Let $G$ be a group. There exists a group ${\rm Pol}_2(G)$ and a natural bijection 
\[
\hom({\rm Pol}_2(G),?) \to {\rm Quad}(G,?),
\]
i.e., the functor ${\rm Quad}(G,?)$ is representable. More specifically, there exists a universal unital quadratic map $\varphi \colon G \to {\rm Pol}_2(G)$ such that every unital quadratic map $\varphi' \colon G \to H$ can be expressed as $\varphi' = \psi \circ \varphi$ for a unique homomorphism $\psi \colon {\rm Pol}_2(G) \to H$.
\end{theorem}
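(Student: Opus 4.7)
The plan is to construct $\mathrm{Pol}_2(G)$ explicitly as a group defined by generators and relations, where the relations record exactly the Leibman quadratic functional equation for a unital map. With such a presentation in hand, the universal property---and hence the representability of $\mathrm{Quad}(G,?)$---will follow immediately from the universal property of group presentations.

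My first step is to determine the correct relations by unwinding the quadratic condition. For a unital $\varphi \colon G \to H$, being polynomial of degree at most two means that $\Delta_h \Delta_k \varphi$ is constant for every $h,k \in G$. A direct expansion gives
\[
(\Delta_h \Delta_k \varphi)(g) \;=\; \varphi(khg)\,\varphi(hg)^{-1}\,\varphi(g)\,\varphi(kg)^{-1},
\]
so evaluating at $g=1$ and using $\varphi(1)=1$ identifies the constant value as $\varphi(kh)\varphi(h)^{-1}\varphi(k)^{-1}$. Consequently, $\varphi$ is unital quadratic if and only if
\[
\varphi(khg)\varphi(hg)^{-1}\varphi(g)\varphi(kg)^{-1} \;=\; \varphi(kh)\varphi(h)^{-1}\varphi(k)^{-1} \qquad (g,h,k \in G).
\]

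Next, I would define $\mathrm{Pol}_2(G)$ to be the group with generators $[g]$, one for each $g \in G$, subject to $[1]=1$ together with the family of relations obtained by substituting the symbol $[\cdot]$ for $\varphi$ in the identity above. By construction the tautological map $\iota \colon G \to \mathrm{Pol}_2(G)$, $g \mapsto [g]$, is then a unital quadratic map. To check universality, given any unital quadratic $\varphi' \colon G \to H$, the elements $\varphi'(g) \in H$ satisfy the same relations by the computation in the previous paragraph, so the universal property of presentations (von Dyck) furnishes a unique homomorphism $\psi \colon \mathrm{Pol}_2(G) \to H$ with $\psi \circ \iota = \varphi'$. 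Naturality of the resulting bijection $\hom(\mathrm{Pol}_2(G),H) \to \mathrm{Quad}(G,H)$ in $H$ is immediate from associativity of composition together with the observation, already noted in the introduction, that postcomposing a quadratic map with a homomorphism gives a quadratic map.

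The main conceptual obstacle is verifying that the relations extracted by specialising $\Delta_h \Delta_k \varphi$ at $g=1$ are actually sufficient, and not merely necessary, for quadraticity; this works precisely because the quadratic condition asserts that the expression is independent of $g$, so its value at $g=1$ determines it at every $g$. A separate and substantially richer task---implicit in the claim that $\mathrm{Pol}_2(G)$ can be computed explicitly---is to extract from this presentation a structural description of the group, for example as an extension of $G^{\mrm{ab}}$ by a commutator- or tensor-like quotient of $G$; that, however, goes beyond what the representability statement in Theorem~\ref{thm:func} itself requires.
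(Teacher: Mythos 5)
Your proposal is correct and follows essentially the same route as the paper: the paper also defines $\mathrm{Pol}_2(G)$ by generators $\varphi(g)$ subject to the relation $\varphi(g_1g_2g_3)=\varphi(g_1g_2)\varphi(g_2)^{-1}\varphi(g_1)^{-1}\varphi(g_1g_3)\varphi(g_3)^{-1}\varphi(g_2g_3)$, which is exactly your identity $(\Delta_h\Delta_k\varphi)(g)=(\Delta_h\Delta_k\varphi)(1)$ after the substitution $(k,h,g)=(g_1,g_2,g_3)$, and then invokes the universal property of presentations. The only cosmetic difference is that the paper verifies quadraticity of the tautological map via Lemma~\ref{lem:affine} (checking that $\psi_{g_1}$ is a homomorphism) and derives $\varphi(1)=1$ from the relation at $(1,1,1)$ rather than imposing it.
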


We can describe the group ${\rm Pol}_2(G)$ in concrete terms as follows. Here, $\omega(G)$ denotes the augmentation ideal of the group ring $\mathbb{Z}[G]$, and $G^{ab}$ denotes the abelianization of $G$. 

\begin{theorem} \label{thm:pol2}
The group ${\rm Pol}_2(G)$ is isomorphic to the twisted crossed product group 
\[
(\omega(G) \otimes_{\mathbb{Z}} G^{ab}) \rtimes_{\psi} G,
\]
where $G$ acts on $\omega(G)$ by left multiplication and trivially on $G^{ab}$. The $2$-cocycle $\psi$ is given by $c \otimes {\rm ab}$, the exterior product of the tautological $1$-cocycle $c \colon G \to \omega(G)$ and the trivial $1$-cocycle ${\rm ab} \colon G \to G^{ab}$.
\end{theorem}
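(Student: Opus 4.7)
The plan is to show that $K := (\omega(G) \otimes_{\mathbb{Z}} G^{ab}) \rtimes_\psi G$ represents the functor ${\rm Quad}(G, -)$; combining this with Theorem~\ref{thm:func}, uniqueness of representing objects will then force ${\rm Pol}_2(G) \cong K$. First I would verify that $\psi(g, h) = (g - 1) \otimes [h]$ is a normalized $G$-$2$-cocycle for the prescribed module structure on $\omega(G) \otimes_{\mathbb{Z}} G^{ab}$. Using the identity $(gh - 1) = (g - 1) + g \cdot (h - 1)$ in $\omega(G)$, the cocycle condition reduces to a direct bilinearity check, and normalization follows from $\psi(1, h) = \psi(g, 1) = 0$.

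Next I would define the candidate universal quadratic map $\varphi_K : G \to K$ by $g \mapsto (0, g)$. A direct computation in the twisted product yields
\[
(\Delta_k \varphi_K)(g) = \bigl(-(k - 1) \otimes [g],\; k\bigr) = \bigl(-(k - 1) \otimes [g],\; 1\bigr) \cdot (0, k),
\]
and the left factor is a homomorphism $G \to K$ in the variable $g$, since it lands in the abelian normal subgroup and $G \to G^{ab}$ is a homomorphism. This presents $\Delta_k \varphi_K$ as a homomorphism times the constant $\varphi_K(k)$, so $\varphi_K$ is unital quadratic.

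The core step is universality. Given any unital quadratic $\varphi' : G \to H$, the required homomorphism $\Phi : K \to H$ is forced: from $\Phi \circ \varphi_K = \varphi'$ one gets $\Phi((0, g)) = \varphi'(g)$, and comparing this to the product $(0, k)(0, h) = ((k - 1) \otimes [h],\, kh)$ in $K$ gives $\Phi((k-1) \otimes [h],\, 1) = F(k, h) := \varphi'(k)\varphi'(h)\varphi'(kh)^{-1}$. Hence $\Phi((a, g)) = \Phi_0(a)\varphi'(g)$ with $\Phi_0$ determined on generators by $F$, and uniqueness of $\Phi$ is immediate. Well-definedness of $\Phi_0$ and the homomorphism property of $\Phi$ must then be extracted from the second-difference identity
\[
\varphi'(khg)\,\varphi'(hg)^{-1}\,\varphi'(g)\,\varphi'(kg)^{-1} = \varphi'(kh)\,\varphi'(h)^{-1}\,\varphi'(k)^{-1},
\]
which encodes $\Delta_k \Delta_h \varphi' \equiv \text{const}$. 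Substituting indices in different orders and cancelling delivers the equivariance $\Phi_0(g \cdot a) = \varphi'(g) \Phi_0(a) \varphi'(g)^{-1}$, the cocycle compatibility $\Phi_0(\psi(g, h)) = F(g, h)$, and the relation $F(k, h_1 h_2) = F(k, h_2) F(k, h_1)$.

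The principal obstacle is the last identity: since $[h_1 h_2] = [h_2 h_1]$ in $G^{ab}$, the map $\Phi_0$ is well-defined on the tensor product only if the family $\{F(k, h) : k, h \in G\}$ pairwise commutes in $H$. Equivalently, the homomorphism $\eta_k : g \mapsto F(k, g)^{-1}$ (which arises because $\Delta_k \varphi'$ is polynomial of degree one, hence homomorphism-times-constant) must factor through $G^{ab}$. Extracting this abelianness from the quadratic identity above is the technical heart of the argument and requires iterated substitution together with Lemma~\ref{lem:affine}; once it is in hand, the remaining checks are routine bookkeeping, and Theorem~\ref{thm:func} then identifies $K$ as a representing object, yielding ${\rm Pol}_2(G) \cong K$.
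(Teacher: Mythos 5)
Your proposal is correct in outline, but it takes the harder half of the argument by a genuinely different route than the paper. Both proofs introduce the candidate map $g \mapsto (0,g)$ into the crossed product and verify by the same computation that it is unital quadratic. The difference lies in how the isomorphism is completed. The paper never verifies the universal property of the crossed product against an arbitrary target $H$: it works with the generators-and-relations model of ${\rm Pol}_2(G)$ from Theorem~\ref{thm:func}, produces a $G$-equivariant surjection $\alpha \colon \omega(G)\otimes_{\mathbb Z} G^{ab} \to {\rm N}(G) = \ker({\rm Pol}_2(G)\to G)$ (surjective because ${\rm N}(G)$ is generated by the elements $\beta_k(g)$, which follows from the defining relations), hence a surjection $\alpha'$ from the crossed product onto ${\rm Pol}_2(G)$, and then obtains the inverse of $\alpha'$ for free as the classifying homomorphism of the quadratic map $g\mapsto(0,g)$. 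You instead build the classifying homomorphism $\Phi((a,g))=\Phi_0(a)\varphi'(g)$ to an arbitrary $H$ by hand; this is more self-contained (it re-proves representability without invoking Theorem~\ref{thm:func}'s presentation for the surjectivity step) but forces you to verify well-definedness of $\Phi_0$ on the tensor product, the equivariance $\Phi_0(g\cdot a)=\varphi'(g)\Phi_0(a)\varphi'(g)^{-1}$, and the compatibility $\Phi_0(\psi(g,h))=F(g,h)$ explicitly. The ``technical heart'' you defer — that each $\beta_k$ is a homomorphism with abelian range factoring through $G^{ab}$, and that the elements $F(k,h)$ pairwise commute — is not something you need to re-derive: it is exactly Proposition~\ref{prop:comm} together with Equation~\eqref{eq:cocycbeta} (which shows that conjugating a $\beta_{k_1}(g)$ by $\varphi'(k_2)$ stays in the abelian subgroup generated by all $\beta_k(g)$, so that the conjugated families $F(k,\cdot)$ still commute), and these are stated in the paper for an arbitrary unital quadratic map. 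The same facts are silently used by the paper to see that $\alpha$ itself is well defined, so neither approach avoids them; yours just makes the dependence explicit at the universality step. With Proposition~\ref{prop:comm} cited in place of ``iterated substitution,'' your argument closes.
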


One may naturally inquire about the classification of polynomial maps of higher degree. While the definition of ${\rm Pol}_k(G)$ is straightforward in terms of generators and relations, obtaining an explicit description such as in Theorem \ref{thm:pol2} seems to be more challenging.

Theorem \ref{thm:pol2} has several noteworthy consequences. In particular, it provides a concrete description of the abelianization of ${\rm Pol}_2(G)$, where ${\rm Sym}^2(G)$ denotes the symmetric product of an abelian group $G$: 

\begin{corollary} \label{cor:abel}
Let $G$ be an abelian group.
The abelianization of ${\rm Pol}_2(G)$ fits into an extension
$$1 \to {\rm Sym}^2(G) \to {\rm Pol}_2^{ab}(G) \to G \to 1,$$
so that ${\rm Pol}_2^{ab}(G)$ is isomorphic to the twisted crossed product group
$${\rm Sym}^2(G) \rtimes_{\sigma} G$$ for the 2-cocycle $\sigma\colon G \times G \to {\rm Sym}^2(G)$  given by $\sigma(g,h)=gh$. 
\end{corollary}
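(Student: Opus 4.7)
The plan is to compute the abelianization of ${\rm Pol}_2(G)$ directly from the presentation given in Theorem~\ref{thm:pol2} and then read off the extension. Since $G$ is abelian, $G^{ab}=G$, so the theorem identifies ${\rm Pol}_2(G)$ with $N \rtimes_\psi G$, where $N = \omega(G) \otimes_{\mathbb{Z}} G$ and $\psi(g,h) = (g-1) \otimes h$. The multiplication is
\[
(x, g)(y, h) = (x + g\cdot y + \psi(g,h), gh).
\]
Because $N$ is abelian and the quotient $G$ is abelian, every commutator in ${\rm Pol}_2(G)$ lies in $N$, so ${\rm Pol}_2^{ab}(G)$ arises by killing the subgroup of $N$ generated by the commutators of the generators $(x,0)$ and $(0,g)$. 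Setting each such pair equal to its reverse product yields exactly two families of relations: (A) $g\cdot x = x$ for all $g \in G$ and $x \in N$, from $[(0,g),(x,0)]$; and (B) $\psi(g,h) = \psi(h,g)$ for all $g, h \in G$, from $[(0,g),(0,h)]$.

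Relation (A) kills the subgroup generated by $(g-1)\cdot a \otimes b$ as $g\in G$, $a\in\omega(G)$, $b\in G$ vary, which is the image of $\omega(G)^2 \otimes G$ in $N$. By right-exactness of tensor together with the classical identification $\omega(G)/\omega(G)^2 \cong G^{ab}=G$ sending $g - 1 \mapsto g$, the quotient of $N$ by (A) identifies with $G \otimes_{\mathbb{Z}} G$, and $\psi(g,h)$ descends to $g \otimes h$. Relation (B) then imposes $g \otimes h = h \otimes g$, producing the further quotient ${\rm Sym}^2(G)$. This identifies the kernel of the induced surjection ${\rm Pol}_2^{ab}(G) \to G$ with ${\rm Sym}^2(G)$, establishing the claimed extension.

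To extract the cocycle, I use the set-theoretic section $s \colon G \to {\rm Pol}_2^{ab}(G)$, $s(g) = \overline{(0,g)}$. Computing $s(g) s(h) = \overline{(\psi(g,h), gh)}$ shows that the associated $2$-cocycle is the class of $\psi(g,h) = (g-1) \otimes h$ inside ${\rm Sym}^2(G)$, which becomes $gh$ under the identifications above, matching the claimed formula for $\sigma$. I expect the main subtlety to be verifying cleanly that (A) and (B) exhaust the commutator relations so that no hidden collapse occurs in $N$, together with the bookkeeping around $\omega(G)/\omega(G)^2 \cong G$ and the resulting tensor-product quotient; the rest is standard.
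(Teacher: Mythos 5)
Your proposal is correct and follows essentially the same route as the paper: quotient $N=\omega(G)\otimes_{\mathbb Z}G$ first by the commutators $[(0,g),(\xi,1)]$ (killing $\omega(G)^2\otimes G$ and reducing to $G\otimes G$ via $\omega(G)/\omega(G)^2\cong G$), then by $[(0,g),(0,h)]$ (symmetrizing to ${\rm Sym}^2(G)$), and read off $\sigma$ from the section $g\mapsto\overline{(0,g)}$. The ``exhaustion'' worry you flag is closed exactly as the paper does it, by observing that the resulting group ${\rm Sym}^2(G)\rtimes_\sigma G$ is already abelian, so no further collapse can occur.
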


We establish Theorem \ref{thm:pol2} and Corollary \ref{cor:abel} in Section \ref{sec:proof}. 

We begin our discussion of the consequences of our main results with the case of non-commutative groups.
Recall that a group $G$ is called \emph{perfect} if it coincides with its commutator subgroup $[G, G]$, or equivalently, if its abelianization $G^{\mathrm{ab}}$ is trivial. 

\begin{corollary}\label{cor:perfect}
If $G$ is a perfect group, then the natural homomorphism $\pi \colon {\rm Pol}_2(G) \to G$ is an isomorphism. Consequently, every unital polynomial map defined on a perfect group is a group homomorphism.
\end{corollary}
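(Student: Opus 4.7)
The plan is to dispose of the two assertions in sequence: first the isomorphism $\pi$, then the homomorphism statement, which I will settle by inducting on the polynomial degree with the nontrivial content concentrated at the quadratic case. Since $G$ is perfect, $G^{ab}$ is trivial, so $\omega(G) \otimes_{\mathbb{Z}} G^{ab} = 0$, and Theorem~\ref{thm:pol2} identifies ${\rm Pol}_2(G)$ with the semidirect product of the trivial group and $G$, i.e.\ with $G$ itself; the natural projection realizing this identification is exactly $\pi$.

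For a unital quadratic $\varphi \colon G \to H$, Theorem~\ref{thm:func} writes $\varphi = \psi \circ \iota$, where $\iota \colon G \to {\rm Pol}_2(G)$ is the universal unital quadratic map and $\psi$ is a unique homomorphism. The natural $\pi$ is precisely what the universal property attaches to the unital quadratic map ${\rm id}_G$, so $\pi \circ \iota = {\rm id}_G$. Since $\pi$ has just been shown to be an isomorphism, $\iota = \pi^{-1}$ is a group homomorphism, and therefore $\varphi = \psi \circ \pi^{-1}$ is a composition of homomorphisms.

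For higher degrees I would induct on $d$: assume every unital polynomial map of degree $\leq d$ on $G$ is a homomorphism (base cases $d \leq 2$ handled by Lemma~\ref{lem:affine} and by the previous paragraph), and let $\varphi$ be unital of degree $\leq d + 1$. For each $k \in G$, set
\[
\tilde{\varphi}_k(g) := \varphi(kg)\varphi(g)^{-1}\varphi(k)^{-1} = (\Delta_k \varphi)(g) \cdot \varphi(k)^{-1}.
\]
Right multiplication by the constant $\varphi(k)^{-1}$ cancels in every iterated finite difference, so $\tilde{\varphi}_k$ is unital polynomial of degree $\leq d$, hence a homomorphism by the inductive hypothesis. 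Consequently $\Delta_k \varphi = \tilde{\varphi}_k \cdot \varphi(k)$ is a homomorphism times a constant, therefore polynomial of degree $\leq 1$, for every $k$; so $\varphi$ is polynomial of degree $\leq 2$, and the quadratic case above concludes the argument.

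The only genuinely substantive step is the quadratic case, where perfectness enters through the structural Theorem~\ref{thm:pol2}; the inductive step is a mechanical degree collapse, so I do not anticipate any real obstacle beyond carefully verifying that right multiplication by a constant preserves the polynomial degree.
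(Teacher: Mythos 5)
Your proposal is correct and follows essentially the same route as the paper: deduce $\ker\pi = \omega(G)\otimes_{\mathbb Z}G^{ab} = 0$ from Theorem~\ref{thm:pol2}, conclude that unital quadratic maps are homomorphisms via the universal property, and handle higher degrees by induction on the degree. You merely spell out the inductive degree-collapse (via right multiplication by $\varphi(k)^{-1}$ and Lemma~\ref{lem:affine}) that the paper dispatches in one sentence, and that verification is accurate.
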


Corollary \ref{cor:perfect} is proved in Section~\ref{sec:cons}, where we also compute ${\rm Pol}_2(\mathbb{F}_d)$ and its abelianization for the free group $\mathbb{F}_d$ on $d$ generators, and show that ${\rm Pol}_2(G)$ is finite for every finite group $G$.

In Section~\ref{sec:abel}, we turn our attention to the case of abelian groups. Of particular interest is the computation of the group of unital quadratic maps ${\rm Quad}(G, S^1)$ for an arbitrary finite abelian group $G$. In Proposition~\ref{prop:primary decomposition}, we establish that
\[
{\rm Quad}(G, S^1) = \bigoplus_p {\rm Quad}(G_{(p)}, S^1),
\]
as abelian groups, where $\bigoplus_p G_{(p)}$ denotes the primary decomposition of $G$. Proposition~\ref{classification} below then gives a complete classification of ${\rm Quad}(G, S^1)$ for an arbitrary finite abelian group $G$. 

\begin{proposition}\label{classification}
Let $p$ be a prime number, and let $G = \bigoplus_{i=1}^d \mathbb{Z}/p^{n_i}\mathbb{Z}$ be a finite abelian $p$-primary group. 
\begin{itemize}
\item[(i)] If $p = 2$, then there is a group isomorphism
\[
{\rm Quad}(G, S^1) \cong \bigoplus_{i=1}^d \mathbb{Z}/2^{n_i - 1}\mathbb{Z} \oplus \bigoplus_{1 \leq i < j \leq d} \mathbb{Z}/2^{\min(n_i, n_j)}\mathbb{Z} \oplus \bigoplus_{i=1}^d \mathbb{Z}/2^{n_i + 1}\mathbb{Z}.
\]
\item[(ii)] If $p \neq 2$, then there is a group isomorphism
\[
{\rm Quad}(G, S^1) \cong \bigoplus_{i=1}^d \mathbb{Z}/p^{n_i}\mathbb{Z} \oplus \bigoplus_{1 \leq i < j \leq d} \mathbb{Z}/p^{\min(n_i, n_j)}\mathbb{Z} \oplus \bigoplus_{i=1}^d \mathbb{Z}/p^{n_i}\mathbb{Z}.
\]
\end{itemize}
\end{proposition}

Our results for abelian groups recover the Tao--Ziegler classification for non-classical polynomials on finite vector spaces~\cite{tz2}*{Lemma~1.7} in the quadratic case (see Remark \ref{tz}), as well as the aforementioned Taylor expansion formula for quadratic maps on~$\mathbb{Z}^d$ (see Example \ref{taylor}). Moreover, from the inverse theorem for the Gowers $U^3$-norm on arbitrary finite abelian groups~\cite{jt}, one can deduce that, when restricting to families of finite abelian groups with uniformly bounded torsion, the obstructions to uniformity are precisely the quadratic maps with values in~$S^1$. In this setting, Proposition~\ref{classification} provides an explicit description of the obstruction.

The final two sections are devoted to applications concerning the stability of approximate polynomial maps and a non-commutative inverse Gowers theory. Recall that a group $G$ is said to have \emph{commutator width} $k$ if every element $g \in G$ can be expressed as a product of at most $k$ commutators. The results in these sections focus on perfect groups with bounded commutator width. In both applications, Theorem~\ref{thm:pol2} is used in the form of Corollary~\ref{cor:perfect}.

In Section~\ref{sec:apppol}, we study approximate polynomial maps with values in metric groups. Let $(H, \partial)$ be a group equipped with a bi-invariant metric. A unital map $\varphi \colon G \to H$ is called a \emph{uniform $\varepsilon$-polynomial of degree~$d$} if
\[
\partial((\Delta_{g_1, \dots, g_{d+1}} \varphi)(1), 1) \leq \varepsilon
\]
for all $g_1, \dots, g_{d+1} \in G$, where $\Delta_{g_1, \dots, g_k} := \Delta_{g_1} \circ \cdots \circ \Delta_{g_k}$. A uniform $\varepsilon$-polynomial of degree one is called uniform $\varepsilon$-homomorphism -- in agreement with the usual understanding of this term, since
$$d((\Delta_{g_1,g_2} \varphi)(1),1) = d(\varphi(g_2g_1)\varphi(g_1)^* \varphi(g_2)^*,1) = d(\varphi(g_2g_1),\varphi(g_2)\varphi(g_1)).$$ We establish the following strong stability result for such approximate polynomial maps.

\begin{theorem} \label{thm:approx}
Let $k \in \mathbb{N}$. There exists a constant $c := c(k) > 0$ such that the following holds:  
Let $d \in \mathbb{N}$, let $(H, \partial)$ be a group equipped with a bi-invariant metric, and let $G$ be a group of commutator width at most $k$. Then every unital uniform $\varepsilon$-polynomial map $\varphi \colon G \to H$ of degree~$d$ is a uniform $c(k)^{d-1} \varepsilon$-homomorphism.
\end{theorem}

In Section~\ref{sec:gowers}, inspired by the work of Gowers--Hatami~\cite{MR3733361}, we define Gowers norms of arbitrary degree for functions on a finite group with values in $M_n \mathbb{C}$ as above. Any polynomial map $\varphi \colon G \to \mathrm{U}(n)$ of degree $k-1$ satisfies $\|\varphi\|_{U^k} = 1$. It is thus natural to ask whether functions with values in the unitary group and $U^k$-norm close to $1$ must be close, in a suitable sense, to a polynomial map - where the estimates depend only on $k$ and the commutator width, and are independent of the group $G$ and the dimension $n$. This includes, for example, uniform estimates for the class of non-abelian finite simple groups, whose commutator width is known to be equal to one, see \cite{ore} and Remark \ref{rem:simple}.

Such a statement is often referred to as an \emph{inverse theorem in the $99\%$ regime}. Our main result is a strong form of such a theorem, which entails a collapse of the higher-order Fourier hierarchy: 

\begin{theorem} \label{thm:gowers}
Let $k_0, k \in \mathbb{N}$. For every $\delta > 0$ there exists $\varepsilon = \varepsilon(\delta, k, k_0) > 0$ such that the following holds:  
Let $G$ be a finite perfect group of commutator width at most $k_0$, and let $\varphi \colon G \to M_n \mathbb{C}$ be a map satisfying $\|\varphi\|_{\infty} \leq 1$ and
\[
\|\varphi\|^{2^k}_{U^k} \geq 1 - \varepsilon.
\]
Then there exists a homomorphism $\beta \colon G \to \mathrm{U}(n')$ with $n' \in [n, (1+\delta)n] \cap \mathbb{N}$ such that
\[
\mathbb{E}_{g \in G} \left\| (\varphi(1)^* \varphi(g) \oplus 1_{n'-n}) - \beta(g) \right\|_2^2 < \delta.
\]
\end{theorem}

By Corollary \ref{cor:perfect}, every unital polynomial map is necessarily a homomorphism.  Hence Theorem \ref{thm:gowers} can be viewed as extending the Gowers--Hatami inverse theorem (\cite{MR3733361}*{Theorem 5.6}) from the case $k=2$ to all higher degrees - albeit only in the $99\%$ regime (whereas Gowers--Hatami treat the more demanding $1\%$ regime). 
Such inverse theorems require strong structural hypotheses on the group; our proof assumes that $G$ is perfect.  Whether a similar collapse of the higher‐order Fourier hierarchy persists in the $1\%$ regime for suitable classes of groups remains an intriguing open question, and at present no further evidence or results are known.

\section{Basic results}

Before we get to quadratic maps, we recall a basic result on polynomial maps of degree one.

\begin{lemma} \label{lem:affine}
A map $\varphi \colon G \to H$ is polynomial of degree one if and only if there exists $c \in H$ and a homomorphism $\psi \colon G \to H$, such that
$$\varphi(g) = \psi(g)c, \quad \forall g \in G.$$
\end{lemma}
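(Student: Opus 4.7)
The plan is to unwind the definition: a map $\varphi \colon G \to H$ is polynomial of degree one precisely when, for every $k \in G$, the difference $\Delta_k \varphi$ is polynomial of degree zero, i.e., a constant function of $g$. So the heart of the argument is to show that this ``constant'' depends multiplicatively on $k$.

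For the forward direction, I would assume $\varphi$ is polynomial of degree one, set $c := \varphi(1)$, and define $\psi \colon G \to H$ by $\psi(k) := \varphi(k) c^{-1}$. Since $\Delta_k \varphi$ is constant in $g$, evaluating at $g=1$ identifies this constant as $\varphi(k)\varphi(1)^{-1} = \psi(k)$; hence $\varphi(kg) = \psi(k)\varphi(g)$ for all $k,g \in G$. In particular $\varphi(g) = \psi(g) c$. To see that $\psi$ is a homomorphism, I would apply the same identity twice:
\[
\psi(kg)c = \varphi(kg) = \psi(k)\varphi(g) = \psi(k)\psi(g)c,
\]
and cancel $c$ on the right. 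Note $\psi(1) = \varphi(1) c^{-1} = 1$ automatically.

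For the converse, assume $\varphi(g) = \psi(g)c$ for some homomorphism $\psi$ and some $c \in H$. Then a direct calculation gives
\[
(\Delta_k \varphi)(g) = \varphi(kg)\varphi(g)^{-1} = \psi(kg) c c^{-1} \psi(g)^{-1} = \psi(k)\psi(g)\psi(g)^{-1} = \psi(k),
\]
which is independent of $g$. Hence $\Delta_k \varphi$ is constant, i.e., polynomial of degree zero, so $\varphi$ is polynomial of degree one by definition.

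There is no real obstacle here; the only subtlety is to avoid assuming $\varphi$ is unital from the outset and to remember that the ``constant'' in $\Delta_k\varphi$ is allowed to depend on $k$, which is exactly the data that assembles into the homomorphism $\psi$. Extracting $c = \varphi(1)$ first, so that the constant value of $\Delta_k\varphi$ can be read off at $g=1$, keeps the argument short and symmetric with respect to the two directions.
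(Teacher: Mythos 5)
Your proof is correct and follows essentially the same route as the paper: evaluate the constant $\Delta_k\varphi$ at the identity to identify it as $\varphi(k)\varphi(1)^{-1}=\psi(k)$, and then deduce multiplicativity of $\psi$. You additionally spell out the (routine) converse direction, which the paper leaves implicit.
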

\begin{proof}
We set $\psi(g):=\varphi(g)\varphi(1)^{-1}$. 
Note that $(\Delta_g \varphi)(h) = \varphi(gh)\varphi(h)^{-1}$ is of degree zero and therefore a constant equal to $\varphi(g)\varphi(1)^{-1}$. Thus, we obtain
$$\psi(gh) = \varphi(gh) \varphi(1)^{-1} = \varphi(g) \varphi(1)^{-1} \varphi(h) \varphi(1)^{-1} = \psi(g) \psi(h)$$
and conclude that $\psi$ is a homomorphism, and set $c:=\varphi(1)$. 
\end{proof}

We will freely use the terminology of group actions and cocycles, see \cite{MR1324339} for details and background.
Note that there is a natural right action of $G$ on the set of functions $f$ from $G$ to $H$, given by $f^{k}(g)=f(kg)$ for all $k,g \in G$. Indeed, we compute \[f^{k_1k_2}(g)=f(k_1k_2g) = f^{k_1}(k_2g) = (f^{k_1})^{k_2}(g).\] In terms of this action, the finite differences satisfy the following cocycle relation. 

\begin{lemma}
Let $\varphi \colon G \to H$ be a map and $k_1,k_2 \in G$, then the following $1$-cocycle identity holds:
\begin{equation} \label{eq:coc}
\Delta_{k_1k_2} \varphi = (\Delta_{k_1} \varphi)^{k_2} \cdot \Delta_{k_2} \varphi
\end{equation}
\end{lemma}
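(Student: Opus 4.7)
The plan is to verify the identity pointwise: fix an arbitrary $g \in G$, expand both sides using the definition $(\Delta_k \varphi)(g) = \varphi(kg)\varphi(g)^{-1}$ together with the right action $f^{k_2}(g) = f(k_2 g)$, and observe that the middle factors cancel telescopically.

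Concretely, I would evaluate the right-hand side at $g$. Writing $f := \Delta_{k_1}\varphi$, the factor $f^{k_2}(g)$ equals $f(k_2 g) = \varphi(k_1 k_2 g)\varphi(k_2 g)^{-1}$. Multiplying by $(\Delta_{k_2}\varphi)(g) = \varphi(k_2 g)\varphi(g)^{-1}$ yields
\[
\varphi(k_1 k_2 g)\varphi(k_2 g)^{-1} \cdot \varphi(k_2 g)\varphi(g)^{-1} = \varphi(k_1 k_2 g)\varphi(g)^{-1},
\]
which is exactly $(\Delta_{k_1 k_2}\varphi)(g)$. Since $g$ was arbitrary, the identity follows.

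There is no real obstacle here; the only thing to be careful about is the order of multiplication, since $H$ is non-abelian, and the convention that $(\Delta_k\varphi)(g)$ puts $\varphi(kg)$ on the left and $\varphi(g)^{-1}$ on the right. These conventions are exactly what is needed for the $\varphi(k_2 g)^{-1}\varphi(k_2 g)$ cancellation in the middle, so the identity is essentially forced by the definitions. The cocycle interpretation, namely that $k \mapsto \Delta_k\varphi$ is a $1$-cocycle on $G$ with values in the right $G$-module $\mathrm{Map}(G,H)$ under the action $f \mapsto f^{k}$, is then a direct reading of the computation above.
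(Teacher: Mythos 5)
Your proof is correct and is essentially the same computation as in the paper: the paper expands $(\Delta_{k_1k_2}\varphi)(g)$ and inserts $\varphi(k_2g)^{-1}\varphi(k_2g)$ in the middle, while you expand the right-hand side and cancel the same factors; the two directions are identical in content. No issues.
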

\begin{proof}
We compute \begin{eqnarray*}
(\Delta_{k_1k_2} \varphi)(g)&=&\varphi(k_1k_2g)\varphi(g)^{-1} \\
&=& \varphi(k_1k_2g)\varphi(k_2g)^{-1}\varphi(k_2g)\varphi(g)^{-1} \\
&=& (\Delta_{k_1} \varphi)(k_2g) \cdot (\Delta_{k_2}\varphi)(g)
\end{eqnarray*}
\end{proof}

Let $\varphi \colon G \to H$ be a unital quadratic map. Using Lemma \ref{lem:affine}, for each $k \in G$ there exists a homomorphism $\psi_k \colon G \to H$ and a constant $c_k \in H$, such that
$$(\Delta_k \varphi)(g)=\varphi(kg)\varphi(g)^{-1} = \psi_k(g)c_k, \quad \forall g,k \in G.$$
For $g=1$, we obtain $c_k=\varphi(k).$ 

Setting
$$\beta_k(g):= \varphi(k)^{-1} \psi_k(g) \varphi(k) = \varphi(k)^{-1} (\Delta_k \varphi)(g)$$ we obtain from Equation\eqref{eq:coc} the following identity:
\begin{equation} \label{eq:cocycbeta}
\beta_{k_1k_2}(g) = \varphi(k_2)^{-1}\beta_{k_1}(g)\varphi(k_2) \cdot \beta_{k_2}(g).
\end{equation}
Indeed, we compute as follows:
\begin{eqnarray*}
\beta_{k_1k_2}(g) &=& \varphi(k_1k_2)^{-1} (\Delta_{k_1k_2} \varphi)(g)    \\
&=& \varphi(k_1k_2)^{-1} (\Delta_{k_1} \varphi)^{k_2}(g)  (\Delta_{k_2} \varphi)(g) \\
&=& \varphi(k_1k_2)^{-1} (\Delta_{k_1} \varphi)(k_2g)  (\Delta_{k_2} \varphi)(g) \\
&=& \varphi(k_1k_2)^{-1} \varphi(k_1) \beta_{k_1}(k_2g) \varphi(k_2) \beta_{k_2}(g) \\
&=& \varphi(k_1k_2)^{-1} \varphi(k_1) \beta_{k_1}(k_2) \beta_{k_1}(g) \varphi(k_2) \beta_{k_2}(g) \\
&=& \varphi(k_2)^{-1} \beta_{k_1}(g) \varphi(k_2) \cdot \beta_{k_2}(g).
\end{eqnarray*}

Note that $\beta_k \colon G \to H$ is also a homomorphism for $k \in G$ and $\beta_1$ is trivial. Moreover, we conclude from Equation \eqref{eq:cocycbeta} that the pointwise product of the homomorphism $g \mapsto \varphi(k_2)^{-1}\beta_{k_1}(g)\varphi(k_2)$ and the homomorphism $g \mapsto \beta_{k_2}(g)$ is again a homomorphism. The following folklore lemma shows that this is a rather special situation.

\begin{lemma} \label{lem:comm}
Let $\alpha,\beta \colon G \to H$ be homomorphisms. \begin{enumerate}[$(i)$]
\item If the pointwise product $\alpha \beta$ is again a homomorphism, then the ranges of $\alpha$ and $\beta$ commute. \item If the pointwise inverse $\alpha^{-1}$ is again a homomorphism, then the range of $\alpha$ is abelian.
\end{enumerate}
\end{lemma}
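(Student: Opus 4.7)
The plan is a direct expansion of the homomorphism condition in each case; no fancy machinery is needed.

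For part $(i)$, I would start from the assumption that $\alpha\beta$ is a homomorphism, which means $(\alpha\beta)(gh) = (\alpha\beta)(g)\cdot (\alpha\beta)(h)$ for all $g,h \in G$. Expanding the left-hand side using that $\alpha$ and $\beta$ are themselves homomorphisms gives $\alpha(g)\alpha(h)\beta(g)\beta(h)$, while the right-hand side is $\alpha(g)\beta(g)\alpha(h)\beta(h)$. Cancelling $\alpha(g)$ on the left and $\beta(h)$ on the right yields $\alpha(h)\beta(g) = \beta(g)\alpha(h)$ for all $g,h \in G$, which is exactly the statement that the ranges of $\alpha$ and $\beta$ commute elementwise.

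For part $(ii)$, I would similarly assume $\alpha^{-1}$ (the pointwise inverse, i.e.\ $g \mapsto \alpha(g)^{-1}$) is a homomorphism, giving $\alpha(gh)^{-1} = \alpha(g)^{-1}\alpha(h)^{-1}$. On the other hand, since $\alpha$ is itself a homomorphism, $\alpha(gh)^{-1} = (\alpha(g)\alpha(h))^{-1} = \alpha(h)^{-1}\alpha(g)^{-1}$. Combining the two expressions gives $\alpha(g)^{-1}\alpha(h)^{-1} = \alpha(h)^{-1}\alpha(g)^{-1}$ for all $g,h \in G$. Since inverses generate the range as a subgroup (and since the displayed identity already says any two elements in the image of $\alpha^{-1} = \alpha$ commute), the range of $\alpha$ is abelian.

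There is no real obstacle here: both statements reduce to a single line of cancellation once the homomorphism property is written out. The only thing to be slightly careful about is notational, namely that $\alpha^{-1}$ means the pointwise inverse $g \mapsto \alpha(g)^{-1}$ rather than the set-theoretic inverse of $\alpha$, but this is clear from context. I would present both parts in parallel, as a short two-paragraph proof.
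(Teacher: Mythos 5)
Your proof is correct, and part $(i)$ is the same cancellation argument as in the paper. For part $(ii)$ the paper instead applies $(i)$ to the pair $\alpha,\alpha^{-1}$ (whose pointwise product is the trivial homomorphism and whose ranges coincide), while you expand $\alpha(gh)^{-1}$ in two ways directly; both are valid one-line arguments and the difference is immaterial.
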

\begin{proof}
We first prove Claim $(i)$ and compute:
\begin{eqnarray*}
\alpha(g)\alpha(h)\beta(g) \beta(h) &=& \alpha(gh)\beta(gh) = (\alpha\beta)(gh) \\&=& (\alpha \beta)(g)(\alpha \beta)(h) =\alpha(g)\beta(g)\alpha(h)\beta(h).
\end{eqnarray*}
Hence, $\alpha(h)\beta(g)=\beta(g)\alpha(h)$ for all $g,h \in G.$ Claim $(ii)$ follows since the pointwise product of $\alpha$ and $\alpha^{-1}$ is the trivial homomorphism and the ranges of $\alpha$ and $\alpha^{-1}$ agree.
\end{proof}

\begin{proposition} \label{prop:comm} The range of $\beta_k$ is abelian for all $k \in G$ and the ranges of the homomorphisms $\beta_{k_1}$ and $\beta_{k_2}$ commute for all $k_1,k_2 \in G.$ 
\end{proposition}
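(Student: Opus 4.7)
The plan is to derive both assertions directly from the cocycle identity \eqref{eq:cocycbeta} by applying Lemma~\ref{lem:comm}(i) at a carefully chosen specialization. Observe first that \eqref{eq:cocycbeta} realizes the homomorphism $\beta_{k_1k_2}$ as the pointwise product of the two homomorphisms $g\mapsto \varphi(k_2)^{-1}\beta_{k_1}(g)\varphi(k_2)$ and $g\mapsto \beta_{k_2}(g)$, so Lemma~\ref{lem:comm}(i) immediately yields the basic relation
\[
\varphi(k_2)^{-1}\beta_{k_1}(g_1)\varphi(k_2)\ \text{commutes with}\ \beta_{k_2}(g_2) \qquad (\ast)
\]
for all $k_1,k_2,g_1,g_2\in G$.

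The crucial step is then to specialize \eqref{eq:cocycbeta} to $k_1=k_2^{-1}$. Since $\beta_1$ is trivial, this produces the ``inversion'' identity
\[
\varphi(k)^{-1}\beta_{k^{-1}}(g)\varphi(k)=\beta_k(g)^{-1}, \qquad (\dagger)
\]
or equivalently $\varphi(k)\beta_k(g)\varphi(k)^{-1}=\beta_{k^{-1}}(g)^{-1}$, which lets one trade conjugation by $\varphi(k)$ for inversion inside $\beta$.

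Claim~(i) now falls out of $(\ast)$ by taking $k_1=k_2^{-1}$: by $(\dagger)$ the first factor simplifies to $\beta_{k_2}(g_1)^{-1}$, so $\beta_{k_2}(g_1)^{-1}$ commutes with $\beta_{k_2}(g_2)$, which gives the abelianness of the range of $\beta_{k_2}$. For claim~(ii), I would conjugate $(\ast)$ by $\varphi(k_2)$ to deduce that $\beta_{k_1}(g_1)$ commutes with $\varphi(k_2)\beta_{k_2}(g_2)\varphi(k_2)^{-1}$; by the rearranged form of $(\dagger)$ this quantity equals $\beta_{k_2^{-1}}(g_2)^{-1}$. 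Since $k_2\mapsto k_2^{-1}$ is a bijection of $G$, this shows that $\beta_{k_1}(g_1)$ and $\beta_{k_2}(g_2)$ commute for all $k_1,k_2,g_1,g_2\in G$.

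The main trick, and the main potential obstacle, is recognizing the specialization $k_1=k_2^{-1}$: without it, $(\ast)$ remains a $\varphi$-twisted commutation relation, and it is precisely the inversion identity $(\dagger)$ that converts it into the genuine commutation statement required by the proposition.
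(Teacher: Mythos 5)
Your argument is correct and follows essentially the same route as the paper: both proofs read Equation~\eqref{eq:cocycbeta} as exhibiting $\beta_{k_1k_2}$ as a pointwise product of homomorphisms, apply Lemma~\ref{lem:comm}, and exploit the specialization to inverse index pairs (where $\beta_1$ is trivial) to obtain the inversion identity. The only difference is bookkeeping in the second claim: the paper notes that $\beta_{k_1k_2}$, being a product of two homomorphisms whose ranges commute with that of $\beta_{k_2}$, itself has range commuting with that of $\beta_{k_2}$, and then uses surjectivity of $k_1\mapsto k_1k_2$, whereas you conjugate the commutation relation by $\varphi(k_2)$ and use the bijection $k_2\mapsto k_2^{-1}$ --- both are valid.
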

\begin{proof}
Indeed, Equation \eqref{eq:cocycbeta} shows for $k_1=k,k_2=k^{-1}$ that the map $g \mapsto \beta_k(g)^{-1}$ is also a homomorphism. By Lemma \ref{lem:comm}(ii), this implies that the range of $\beta_k$ is abelian for all $k \in G$. Now, Lemma \ref{lem:comm}(i) implies that the ranges of $g \mapsto \varphi(k_2)^{-1}\beta_{k_1}(g)\varphi(k_2)$ and $\beta_{k_2}$ commute. Hence, we conclude also that their product $\beta_{k_1k_2}$ commutes with $\beta_{k_2}$. Since $k_1,k_2$ were arbitrary, this proves the claim.
\end{proof}

We can already prove our first result.

\begin{proof}[Proof of Theorem \ref{thm:func}:] Let $G$ be a group.
We define a group ${\rm Pol}_2(G)$ on formal generators $\varphi(g)$ for $g \in G$, subject to relations
\begin{equation} \label{eq:rel}
\varphi(g_1g_2g_3)= \varphi(g_1g_2)\varphi(g_2)^{-1}\varphi(g_1)^{-1} \varphi(g_1g_3)\varphi(g_3)^{-1}\varphi(g_2g_3)
\end{equation}
for all $g_1,g_2,g_3 \in G.$ We claim that the tautological map $\varphi \colon G \to {\rm Pol}_2(G)$ is the universal unital quadratic map. Applying the relation to $(g_1,g_2,g_3)=(1,1,1)$, we see that $\varphi(1)=1$, so that $\varphi$ is unital.

The requirement for $\varphi$ to be quadratic is precisely that $\Delta_{g_1} \varphi$ is linear for every $g_1$. Recall that $$(\Delta_{g_1} \varphi)(g) := \varphi(g_1g)\varphi(g)^{-1},$$ so that $(\Delta_{g_1} \varphi)(1)= \varphi(g_1).$ We set
$$\psi_{g_1}(g) := \varphi(g_1g)\varphi(g)^{-1} \varphi(g_1)^{-1}.$$
It remains to show that $\psi_{g_1}$ is a homomorphism for every $g_1.$ However,
\begin{eqnarray*}
\psi_{g_1}(g_2)\psi_{g_1}(g_3) &=& \varphi(g_1g_2)\varphi(g_2)^{-1} \varphi(g_1)^{-1}\varphi(g_1g_3)\varphi(g_3)^{-1} \varphi(g_1)^{-1} \\
&\stackrel{(\ref{eq:rel})}{=}& \varphi(g_1g_2g_3) \varphi(g_2g_3)^{-1} \varphi(g_1)^{-1} \\
&=& \psi_{g_1}(g_2g_3).
\end{eqnarray*}
This proves the claim and shows that $\varphi$ is indeed quadratic. At the same time, we see that all relations are necessary so that it follows that $\varphi$ is indeed the universal quadratic map.
\end{proof}

\begin{remark} It is clear form the construction that a similar approach would define ${\rm Pol}_k(G)$, the relations to put are just $((\Delta_{g_1} \circ \cdots \circ \Delta_{g_{k+1}}) \varphi)(g_{k+2})$ for all $g_1,\dots,g_{k+2} \in G$ and $\varphi(1)=1.$
\end{remark}

\begin{proposition}
The assignment $G \mapsto {\rm Pol}_2(G)$ is a functor on the category of groups.
\end{proposition}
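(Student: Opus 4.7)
The plan is to derive functoriality directly from the universal property established in Theorem \ref{thm:func}, combined with the observation recorded just after the definition of polynomial maps: the composition of a polynomial map of degree $d$ with a group homomorphism (in either order) is again polynomial of degree $d$.

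Given a group homomorphism $f \colon G \to G'$, let $\varphi_G \colon G \to {\rm Pol}_2(G)$ and $\varphi_{G'} \colon G' \to {\rm Pol}_2(G')$ denote the universal unital quadratic maps supplied by Theorem \ref{thm:func}. First I would observe that $\varphi_{G'} \circ f \colon G \to {\rm Pol}_2(G')$ is a unital quadratic map: it is quadratic because $\varphi_{G'}$ is quadratic and $f$ is a homomorphism, and it is unital because $f(1)=1$ and $\varphi_{G'}(1)=1$. By the universal property, there exists a unique group homomorphism
\[
{\rm Pol}_2(f) \colon {\rm Pol}_2(G) \to {\rm Pol}_2(G')
\]
satisfying ${\rm Pol}_2(f) \circ \varphi_G = \varphi_{G'} \circ f$. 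This defines the action of the assignment on morphisms.

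To verify the functor axioms I would argue by uniqueness. For $f={\rm id}_G$, the identity ${\rm id}_{{\rm Pol}_2(G)}$ trivially satisfies ${\rm id}_{{\rm Pol}_2(G)} \circ \varphi_G = \varphi_G \circ {\rm id}_G$, so uniqueness yields ${\rm Pol}_2({\rm id}_G) = {\rm id}_{{\rm Pol}_2(G)}$. For composable homomorphisms $f \colon G \to G'$ and $g \colon G' \to G''$, both ${\rm Pol}_2(g \circ f)$ and ${\rm Pol}_2(g) \circ {\rm Pol}_2(f)$ are homomorphisms ${\rm Pol}_2(G) \to {\rm Pol}_2(G'')$ whose precomposition with $\varphi_G$ equals $\varphi_{G''} \circ g \circ f$ — for the second map this uses the defining property of ${\rm Pol}_2(f)$ followed by that of ${\rm Pol}_2(g)$. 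Uniqueness in Theorem \ref{thm:func} then forces ${\rm Pol}_2(g \circ f) = {\rm Pol}_2(g) \circ {\rm Pol}_2(f)$.

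There is no substantive obstacle here; the statement is pure abstract nonsense from representability (any representable functor to sets, viewed through a representing object in an algebraic category, automatically promotes to a functor in that category). The only point that deserves an explicit remark in the write-up is the very first step, namely that precomposition with a homomorphism preserves the class of unital quadratic maps, which is what makes the universal property applicable.
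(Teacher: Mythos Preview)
Your argument is correct and is exactly the standard Yoneda-style verification one expects from representability; the paper itself states this proposition without proof, presumably because it is precisely this kind of routine abstract nonsense. The one step you flag as worth remarking on---that precomposition of a unital quadratic map with a homomorphism is again unital quadratic---is indeed the only non-formal ingredient, and it is already recorded in the paper right after the definition.
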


The following corollary will allow us to compute ${\rm Pol}_2(G)$ in various special cases.

\begin{corollary} \label{cor:comp}
Let $G$ be a group with symmetric generating set $S \subseteq G$ and let $N \leq G$ be a normal subgroup generated by a symmetric set $\Sigma \subseteq N$. Then ${\rm Pol}_2(G/N)$ is canonically isomorphic to the quotient of ${\rm Pol}_2(G)$ by the normal subgroup generated by the elements $\varphi(\sigma)$ and $\varphi(\sigma s) \varphi(s)^{-1}$ for $\sigma \in \Sigma, s \in S.$\end{corollary}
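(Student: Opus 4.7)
The plan is to construct mutually inverse homomorphisms between ${\rm Pol}_2(G/N)$ and the quotient $P := {\rm Pol}_2(G)/M$, where $M$ is the normal subgroup described in the statement, using the universal property of Theorem~\ref{thm:func}.

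In one direction, the projection $\pi \colon G \to G/N$ induces by functoriality a homomorphism $\pi_* \colon {\rm Pol}_2(G) \to {\rm Pol}_2(G/N)$ with $\pi_*(\varphi_G(g)) = \varphi_{G/N}(gN)$. Evaluating on the generators of $M$ gives $\pi_*(\varphi_G(\sigma)) = \varphi_{G/N}(1) = 1$ and $\pi_*(\varphi_G(\sigma s)\varphi_G(s)^{-1}) = \varphi_{G/N}(sN)\varphi_{G/N}(sN)^{-1} = 1$, so $M \subseteq \ker \pi_*$ and $\pi_*$ descends to a surjective homomorphism $\widetilde\pi_* \colon P \to {\rm Pol}_2(G/N)$.

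For the reverse direction I would exhibit a unital quadratic map $\bar\varphi \colon G/N \to P$ and apply Theorem~\ref{thm:func} again. Write $\widetilde\varphi \colon G \to P$ for the composition of the universal quadratic map with the quotient to $P$. The key step is to show that $\widetilde\varphi$ is constant on the cosets of $N$, which by normality reduces to the left-invariance $\widetilde\varphi(ng) = \widetilde\varphi(g)$ for all $g \in G$ and $n \in N$. Setting $\bar\beta_n(g) := \widetilde\varphi(n)^{-1}\widetilde\varphi(ng)\widetilde\varphi(g)^{-1}$, define
\[
A := \{\, n \in N : \widetilde\varphi(n) = 1 \text{ and } \bar\beta_n \equiv 1 \,\};
\]
this is exactly the set of $n$ realising the required invariance. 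Reducing Equation~\eqref{eq:cocycbeta} modulo $M$ shows that $A$ is closed under products, and taking $k_1 = n$, $k_2 = n^{-1}$ in that identity (combined with $\bar\beta_1 \equiv 1$) yields closure under inverses, so $A$ is a subgroup of $N$.

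To establish $\Sigma \subseteq A$, note that $\widetilde\varphi(\sigma) = 1$ by definition of $M$, and $\bar\beta_\sigma(s) = \widetilde\varphi(\sigma)^{-1}\widetilde\varphi(\sigma s)\widetilde\varphi(s)^{-1} = 1$ for every $s \in S$, again by definition of $M$. Because $\bar\beta_\sigma$ is a homomorphism $G \to P$ and $S$ generates $G$, this forces $\bar\beta_\sigma \equiv 1$ on all of $G$. Combined with the subgroup property of $A$ and the hypothesis that $\Sigma$ generates $N$, we conclude $A = N$. Hence $\widetilde\varphi$ descends to a unital map $\bar\varphi \colon G/N \to P$, which inherits the quadratic relation~\eqref{eq:rel} from $\widetilde\varphi$; Theorem~\ref{thm:func} then furnishes a homomorphism $\Psi \colon {\rm Pol}_2(G/N) \to P$. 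Mutual inverseness of $\Psi$ and $\widetilde\pi_*$ is verified on the respective generators $\varphi_{G/N}(gN)$ and $[\varphi_G(g)]$. The main obstacle is the promotion from the one-sided data $\bar\beta_\sigma|_S \equiv 1$ to $\bar\beta_\sigma \equiv 1$ on all of $G$, and then from $\Sigma \subseteq A$ to $A = N$; this is driven entirely by the homomorphism property of $\bar\beta_\sigma$ together with the cocycle identity~\eqref{eq:cocycbeta}.
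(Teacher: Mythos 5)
Your argument is correct and follows essentially the same route as the paper: the forward map comes from functoriality and the containment of the generators of $M$ in the kernel, and the reverse map is obtained by showing the induced quadratic map $G \to {\rm Pol}_2(G)/M$ descends to $G/N$, using the homomorphism property of $\beta_\sigma$ to pass from $S$ to all of $G$ and the cocycle identity \eqref{eq:cocycbeta} to pass from $\Sigma$ to all of $N$. Your set $A$ merely packages in one step what the paper does in two (first $\beta_h(g)\in K_N$ for $h\in N$, then $\varphi(h)\in K_N$ by induction on word length), so there is nothing substantively different.
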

\begin{proof}
We set $K_N := \langle \! \langle \varphi(\sigma), \varphi(\sigma s) \varphi(s)^{-1}\colon  \sigma \in \Sigma, s \in S \rangle \! \rangle$ and aim to show that ${\rm Pol}_2(G)/K_N$ has the required universal property. Note that $K_N$ lies in the kernel of the canonical map ${\rm Pol}_2(G) \to {\rm Pol}_2(G/N).$ It remains to provide an inverse to the induced homomorphism ${\rm Pol}_2(G)/K_N \to {\rm Pol}_2(G/N).$ In order to do so we show that the canonical quadratic map $\bar \varphi \colon G \to {\rm Pol}_2(G)/K_N$ is well-defined on $G/N.$

Now, it follows that $\beta_{\sigma}(s) = \varphi(\sigma)^{-1}\varphi(\sigma s) \varphi(s)^{-1}  \in K_N$ for $\sigma \in \Sigma, s\in S$ and hence $\beta_{\sigma}(g) \in K_N$ for all $g \in G$. We conclude from Equation \eqref{eq:cocycbeta} and the fact that $K_N$ is normal, that $\beta_h(g) \in K_N$ for all $h \in N$ and $g \in G.$ Since $\beta_{h}(h')= \varphi(h)^{-1} \varphi(hh')\varphi(h')^{-1}$ it follows by induction on the word length that $\varphi(h) \in K_N$ for all $h \in N.$ Finally, this implies that $\varphi(hg)\varphi(g)^{-1} \in K_N$ for all $h \in N,g \in G$. Thus, $\bar \varphi$ is indeed well-defined on $G/N.$
\end{proof}

\section{The computation of ${\rm Pol}_2(G)$ and its abelianization} \label{sec:proof}

This section is concerned with the proof of our main result about the structure of ${\rm Pol}_2(G).$ This is the basis for later sections, in which we exploit this structure result in order to derive explicit computations of ${\rm Pol}_2(G)$ in special cases and other consequences.

\begin{proof}[Proof of Theorem \ref{thm:pol2}]
We have an extension
$$1 \to {\rm N}(G) \to {\rm Pol}_2(G) \stackrel{\pi}{\to} G \to 1$$
where $\pi$ is the homomorphism that classifies the unital quadratic map ${\rm id}\colon G \to G$, and the group ${\rm N}(G)$ is defined to be $\ker(\pi)$. Since $\pi(\varphi(g))=g$, we have that $\varphi \colon G \to {\rm Pol}_2(G)$ is a canonical set-theoretic section of $\pi$.  Recall that $\beta_k(g) = \varphi(k)^{-1}\varphi(kg) \varphi(g)^{-1}$ for all $k,g \in G$. 
\begin{proposition} 
The group ${\rm N}(G)$ is abelian and generated as a group by the set $\{\beta_k(g)\colon  g,k \in G \setminus \{1\}\}.$
\end{proposition}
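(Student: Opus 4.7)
The plan is to introduce $K := \langle \beta_k(g) : k,g \in G \setminus \{1\}\rangle \leq {\rm Pol}_2(G)$ and prove that $K$ equals ${\rm N}(G)$. Since $\varphi(1)=1$ forces $\beta_1(g) = \beta_k(1) = 1$, this indexing captures all nontrivial $\beta$-values. The abelianness of $K$ (and hence of ${\rm N}(G)$, once the identification is established) is immediate from Proposition \ref{prop:comm}: each $\beta_k$ has abelian range, and the ranges of distinct $\beta_{k_i}$ commute inside ${\rm Pol}_2(G)$, so all of the generators of $K$ commute pairwise.

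The heart of the proof is to show that $K$ is normal in ${\rm Pol}_2(G)$. Since the $\varphi(h)$ generate ${\rm Pol}_2(G)$, it suffices to verify both $\varphi(h)^{-1}\, K\, \varphi(h) \subseteq K$ and $\varphi(h)\, K\, \varphi(h)^{-1} \subseteq K$. The first inclusion is immediate from \eqref{eq:cocycbeta}, which rearranges to $\varphi(h)^{-1} \beta_k(g) \varphi(h) = \beta_{kh}(g)\beta_h(g)^{-1}$. The reverse inclusion is the main technical obstacle, because the cocycle relation is asymmetric in $\varphi(h)$ versus $\varphi(h)^{-1}$ (note in particular that $\varphi(h^{-1})$ is not $\varphi(h)^{-1}$ in general). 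I would resolve this via the identity $\varphi(h^{-1})^{-1} = \varphi(h)\,\beta_h(h^{-1})$, which follows from $1 = \varphi(hh^{-1}) = \varphi(h)\beta_h(h^{-1})\varphi(h^{-1})$: applying \eqref{eq:cocycbeta} with $k_2 = h^{-1}$ and then using the already-established abelianness of $K$ to cancel the conjugation by the $K$-element $\beta_h(h^{-1})$, one obtains $\varphi(h)\beta_k(g)\varphi(h)^{-1} = \beta_{kh^{-1}}(g)\,\beta_{h^{-1}}(g)^{-1} \in K$.

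Once $K$ is known to be normal, the final step is the set-theoretic factorization ${\rm Pol}_2(G) = K\cdot \varphi(G)$. This rests on two computations: from $\varphi(g_1 g_2) = \varphi(g_1)\beta_{g_1}(g_2)\varphi(g_2)$ one obtains $\varphi(g_1)\varphi(g_2) = \bigl[\varphi(g_1)\beta_{g_1}(g_2)^{-1}\varphi(g_1)^{-1}\bigr]\,\varphi(g_1 g_2)$, and from $\varphi(gg^{-1}) = 1$ one obtains $\varphi(g)^{-1} = \beta_g(g^{-1})\,\varphi(g^{-1})$. Together with normality, these show that $K\cdot \varphi(G)$ is closed under products and inverses, hence equals ${\rm Pol}_2(G)$. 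Finally, since $\pi|_K \equiv 1$ and $\pi\circ\varphi = {\rm id}_G$, any $n \in {\rm N}(G)$ written as $n = k\,\varphi(g)$ forces $g = \pi(n) = 1$ and therefore $n = k \in K$; this yields ${\rm N}(G) = K$, which is abelian and generated by $\{\beta_k(g) : k,g \in G \setminus \{1\}\}$, as claimed.
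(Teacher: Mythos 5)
Your argument is correct and follows the paper's strategy: normality of $K:=\langle \beta_k(g)\rangle$ via the cocycle identity \eqref{eq:cocycbeta}, identification of $K$ with $\ker\pi$, and abelianness via Proposition \ref{prop:comm}. You differ from the paper in two points of execution, both to your credit. First, the paper derives only $\varphi(k_2)^{-1}\beta_{k_1}(g)\varphi(k_2)\in K$ from \eqref{eq:cocycbeta} and immediately declares $K$ normal; you correctly note that conjugation in the other direction also needs checking, and you supply it through the identity $\varphi(h^{-1})^{-1}=\varphi(h)\beta_h(h^{-1})$ together with the commutativity of the $\beta$'s from Proposition \ref{prop:comm} (which applies to the universal $\varphi$ with no circularity, since that proposition concerns an arbitrary unital quadratic map). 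Second, for the identification $K={\rm N}(G)$ the paper argues via the presentation of ${\rm Pol}_2(G)$ --- imposing the relations $\beta_k(g)=1$ turns $\varphi$ into a homomorphism and yields exactly $G$, so $\ker\pi$ is the normal closure of the $\beta_k(g)$, which equals $K$ once $K$ is normal --- whereas you prove the coset decomposition ${\rm Pol}_2(G)=K\cdot\varphi(G)$ directly and read off $\ker\pi\subseteq K$ from $\pi\circ\varphi={\rm id}_G$. The two mechanisms are equivalent; yours is more hands-on and self-contained, while the paper's is shorter but leans on the universal property.
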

\begin{proof}
We obtain from Equation \eqref{eq:cocycbeta} that
$$\varphi(k_2)^{-1} \beta_{k_1}(g) \varphi(k_2)= \beta_{k_1k_2}(g) \beta_{k_2}(g)^{-1}.$$
Since ${\rm Pol}_2(G)$ is generated by $\varphi(G)$, we conclude that the group $\langle \beta_k(g)\colon g,k \in G \setminus \{1\} \rangle$ is normal in $G$. 
Now, since the relations of the form $\beta_k(g)$ define the quotient $G={\rm Pol}_2(G)/{\rm N}(G)$, we obtain ${\rm N}(G)=\langle \beta_k(g)\colon g,k \in G \setminus \{1\} \rangle$ as claimed. By Proposition \ref{prop:comm}, ${\rm N}(G)$ is abelian.
\end{proof}

Let's try to analyze the extension 
$$1 \to {\rm N}(G) \to {\rm Pol}_2(G) \to G \to 1$$ more closely. The study of extensions with abelian kernel is a classical topic, see, e.g., \cite{MR1324339}*{p.90} for background.

First observe, that there is a homomorphism $\mu \colon G \to {\rm Aut}({\rm N}(G)),$ given by the formula
$\mu_g(h) = \varphi(g)h\varphi(g)^{-1}$ for $h \in {\rm N}(G), g \in G$. Indeed, this is a homomorphism since $\varphi(g)^{-1}\varphi(gh)\varphi(h)^{-1} =
\beta_g(h) \in {\rm N}(G)$ and ${\rm N}(G)$ is abelian.

Now, if we write the cocycle relation in additive notation, we obtain:
\begin{equation} \label{eq:additcoc}
\beta_{k_1k_2}(g) = \mu_{k_2^{-1}}(\beta_{k_1}(g)) + \beta_{k_2}(g).
\end{equation}
or equivalently
\begin{equation*}
\mu_{k_2^{-1}}(\beta_{k_1}(g)) = \beta_{k_1k_2}(g) - \beta_{k_2}(g).
\end{equation*}
There is also a 2-cocycle $\psi \colon G \times G \to {\rm N}(G)$ with respect to the action just defined. The formula for $\psi$ is given by
$$\psi(g,h) = - \mu_g(\beta_{g}(h)) = \varphi(g)\varphi(h)\varphi(gh)^{-1}.$$

Recall, that in formulas being a 2-cocycle means:
$$\mu_{g_1}(\psi(g_2,g_3)) - \psi(g_1g_2,g_3) + \psi(g_1,g_2g_3) - \psi(g_1,g_2)=0$$
for all $g_1,g_2,g_3 \in G,$ which is easily checked.
Indeed, for sake of completeness, we compute:
\begin{eqnarray*}
&& \mu_{g_1}(\psi(g_2,g_3)) - \psi(g_1g_2,g_3) + \psi(g_1,g_2g_3) - \psi(g_1,g_2) \\
&=& - \mu_{g_1}(\mu_{g_2}(\beta_{g_2}(g_3))) + \mu_{g_1g_2}(\beta_{g_1g_2}(g_3))- \mu_{g_1}(\beta_{g_1}(g_2g_3)) + \mu_{g_1}(\beta_{g_1}(g_2)) \\
&=& \mu_{g_1}(- \mu_{g_2}(\beta_{g_2}(g_3)) + \mu_{g_2}(\beta_{g_1g_2}(g_3))- \beta_{g_1}(g_2g_3) + \beta_{g_1}(g_2)) \\
&=& \mu_{g_1}(- \mu_{g_2}(\beta_{g_2}(g_3)) + \mu_{g_2}(\beta_{g_1g_2}(g_3))- \beta_{g_1}(g_3)) \\
&=& \mu_{g_1g_2}(- \beta_{g_2}(g_3) + \beta_{g_1g_2}(g_3)- \mu_{g_2^{-1}}(\beta_{g_1}(g_3))) \stackrel{(\ref{eq:additcoc})}{=} 0.
\end{eqnarray*}

The extension would be described completely as $${\rm Pol}_2(G)= {\rm N}(G) \rtimes_{\mu,\psi} G,$$ after determining the $\mathbb Z[G]$-module ${\rm N}(G)$ and understanding the cohomology class $[\psi] \in H^2(G,{\rm N}(G))$ more explicitly. Recall that the multiplication in the twisted crossed product is given by
$$(\xi_1,g_1)(\xi_2,g_2) = (\xi_1 + \mu_{g_1}(\xi_2) + \psi(g_1,g_2),g_1g_2).$$

We denote by $\omega(G)$ the augmentation ideal of $\mathbb Z[G]$, i.e., $\omega(G) := \{ a \in \mathbb Z[G], \varepsilon(a)=0\}$ with $\varepsilon(\sum_g a_g g):= \sum_g a_g.$
There is a natural free basis for $\omega(G)$ consisting of the set $\{g-1 \mid g \in G, g \neq 1\}.$ We set $c(g):=g-1$ and note that $gc(h) = gh-g = c(gh) - c(g).$ This is the tautological $1$-cocycle $c \colon G \to \omega(G).$

The key to understand ${\rm N}(G)$ is to note that there exists a $G$-equivariant surjection
$\alpha \colon  \omega(G) \otimes_{\mathbb Z }G^{ab}  \to {\rm N}(G)$ given by the formula
$\alpha(c(h) \otimes \bar g ) = \beta_{h^{-1}}(g),$ where $\bar g$ denotes the image of $g \in G$ in $G^{ab}$. The $G$-equivariance is checked by the following computation:
\begin{eqnarray*}
\alpha(g_1(c(h) \otimes \bar g))&=& \alpha(c(g_1h) \otimes \bar g)) - \alpha(c(g_1) \otimes \bar g) \\
&=& \beta_{h^{-1}g_1^{-1}}(g) - \beta_{g_1^{-1}}(g)\\
&=& \mu_{g_1}(\beta_{h^{-1}}(g))\\
&=& \mu_{g_1}(\alpha(c(h) \otimes \bar g)).
\end{eqnarray*}
We claim that $\alpha$ is an isomorphism. In order to show this, we would like to identify a 2-cocycle $\psi' \colon G\times G \to \omega(G) \otimes_{\mathbb Z} G^{ab}$ compatible with $\alpha$ and $\psi$, so that the natural map
$$(\omega(G) \otimes_{\mathbb Z} G^{ab}) \rtimes_{\psi'} G \to {\rm Pol}_2(G)$$
is an isomorphism. This then also gives the desired computation of ${\rm Pol}_2(G).$

First of all, we need a good formula for the $2$-cocycle $\psi' \colon G \times G \to \omega(G) \otimes_{\mathbb Z }G^{ab}.$
Since we have
$\psi(g,h)=\varphi(g)\varphi(h)\varphi(gh)^{-1}
= \mu_g(\beta_g(h))^{-1}$
a natural guess for a $2$-cocycle $\psi' \colon G \times G \to \omega(G) \otimes G^{ab}$ would be
$$\psi'(g,h)=- g (c(g^{-1}) \otimes \bar h) =  c(g) \otimes \bar h.$$ This is indeed a 2-cocycle, namely the exterior product of the tautological $1$-cocycle $c \colon G \to \omega(G)$ and the trivial $1$-cocycle ${\rm ab} \colon G \to G^{ab}$, where $G$ acts trivially on the abelian group $G^{ab}.$
Moreover, if we compose the $2$-cocycle $\psi' \colon G \times G \to \omega(G) \otimes_{\mathbb Z }G^{ab}$ with the surjective $G$-module homomorphism $\alpha \colon  \omega(G) \otimes_{\mathbb Z }G^{ab}  \to {\rm N}(G)$, then we obtain $\psi$ by construction.

Thus, we may consider the twisted crossed product group
$(\omega(G) \otimes_{\mathbb Z} G^{ab}) \rtimes_{\psi'} G$ with its natural multiplication
$$(\xi_1,g_1)(\xi_2,g_2) := (\xi_1 + g_1(\xi_2) + \psi'(g_1,g_2),g_1g_2).$$
We obtain a natural surjection $$\alpha' \colon (\omega(G) \otimes_{\mathbb Z} G^{ab}) \rtimes_{\psi'} G \to {\rm Pol}_2(G)$$ given by
$\alpha'((c(h) \otimes \bar g_1),g_2) = \beta_{h^{-1}}(g_1) \varphi(g_2).$

In order to find the inverse of $\alpha'$, we use the universal property of ${\rm Pol}_2(G)$ and just construct a quadratic map
$$\varphi' \colon G \to (\omega(G) \otimes_{\mathbb Z} G^{ab}) \rtimes_{\psi'} G.$$ Our philosophy suggests that it must be 
$\varphi'(g):= (0,g)$ in the notation above. Indeed, if $\varphi'$ is quadratic, it is easy to see that its classifying map is the inverse of $\alpha$. The inverse of $\varphi'(g)$ is naturally computed as $\varphi'(g)^{-1} = (-\psi'(g^{-1},g),g^{-1}).$

Now, for any $g \in G$, we obtain
\begin{eqnarray*}
\beta'_g(h)&=& \varphi'(g)^{-1} \varphi'(gh) \varphi'(h)^{-1}\\
&=& (-\psi'(g^{-1},g),g^{-1})(0,gh)(-\psi'(h^{-1},h),h^{-1}) \\
&=& (-\psi'(g^{-1},g) + \psi'(g^{-1},gh),h)(-\psi'(h^{-1},h),h^{-1})\\
&=& (-\psi'(g^{-1},g) + \psi'(g^{-1},gh) - h \psi'(h^{-1},h) + \psi(h,h^{-1}),1) \\
&=& -c(g^{-1}) \otimes \bar g + c(g^{-1}) \otimes (\bar g +\bar h) - hc(h^{-1}) \otimes \bar h - c(h) \otimes h\\
&=& c(g^{-1}) \otimes \bar h.
\end{eqnarray*}
This is clearly a homomorphism in $h$. Thus, $\varphi'$ is quadratic and the proof of Theorem \ref{thm:pol2} is complete.
\end{proof}

Having Theorem \ref{thm:pol2} at hand it is not difficult to describe its abelianization in concrete terms.

\begin{proof}[Proof of Corollary \ref{cor:abel}] Let $G$ be an abelian group.
We need to compute the abelianization of the group $(\omega(G) \otimes_{\mathbb Z} G) \rtimes_{\psi} G$, where $
\psi \colon G \times G \to \omega(G) \otimes_{\mathbb Z} G^{ab}$ is given by $\psi(g,h)=c(g) \otimes \bar h.$ We have
$$(\xi_1,g_1)(\xi_2,g_2) := (\xi_1 + g_1(\xi_2) + \psi'(g_1,g_2),g_1g_2)$$
for all $\xi_1,\xi_2 \in \omega(G) \otimes_{\mathbb Z} G$ and $g_1,g_2 \in G.$ Now, clearly
$[(0,g),(\xi,1)] = (g\xi - \xi,1)$, so that $\omega(G)^2 \otimes_{\mathbb Z} G$ lies in the commutator subgroup. Here $\omega(G)^2$ denotes the square of the ideal $\omega(G) \subseteq \mathbb Z[G]$. However, $\omega(G)/\omega(G)^2 = G$, so that the map to the abelianization factorizes via
$(G \otimes_{\mathbb Z} G) \rtimes_{\psi''} G$ with the two-cocycle $\psi''(g,h)= g \otimes h.$ This group is not yet abelian, but 
\begin{eqnarray*}
[(0,g),(0,h)] &=& (0,g)(0,h)(\psi''(g,g),-g)(\psi''(h,h),-h)\\
&=& (\psi''(g,h),g+h)(\psi''(g,g)+\psi(h,h)+ \psi''(g,h),-g-h) \\
&=& (\psi''(g,h) - \psi''(h,g),0).
\end{eqnarray*}
Hence, the abelianization factorizes via ${\rm Sym}^2(G) \rtimes_{\sigma} G$ with $\sigma \colon G \times G \to {\rm Sym}^2(G)$ given by $\sigma(g,h)=g \cdot h.$ This group is easily checked to be abelian and hence isomorphic to the abelianization of ${\rm Pol}_2(G)$. This finishes the proof.
\end{proof}

\section{Computations and consequences for non-commutative groups}\label{sec4}
\label{sec:cons}

This section contains some concrete computations of ${\rm Pol}_2(G)$ and its abelianization. The following corollary is already interesting for $d=1.$

\begin{corollary} \label{cor:free}
Let $\mathbb F_d$ be the free group on $d$ generators. The group ${\rm Pol}_2(\mathbb F_d)$ is isomorphic to
$(\mathbb Z[\mathbb F_d]^{\oplus d^2}) \rtimes \mathbb F_d$. Moreover, the abelianization is isomorphic to $\mathbb Z^{d^2+d}.$
\end{corollary}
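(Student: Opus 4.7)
My strategy is to apply Theorem \ref{thm:pol2} and then exploit the special homological properties of free groups to simplify both the underlying module and the twisting cocycle. The first step is to identify $\omega(\mathbb{F}_d)$ explicitly: by the classical free resolution $0 \to \mathbb{Z}[\mathbb{F}_d]^{\oplus d} \to \mathbb{Z}[\mathbb{F}_d] \to \mathbb{Z} \to 0$ (equivalently, by Fox calculus), the augmentation ideal $\omega(\mathbb{F}_d)$ is a free left $\mathbb{Z}[\mathbb{F}_d]$-module of rank $d$, with basis $\{x_1-1,\dots,x_d-1\}$ for $x_1,\dots,x_d$ the free generators. Since $\mathbb{F}_d^{ab} \cong \mathbb{Z}^d$ and $\mathbb{F}_d$ acts trivially on $\mathbb{F}_d^{ab}$, tensoring over $\mathbb{Z}$ produces
\[
\omega(\mathbb{F}_d) \otimes_{\mathbb{Z}} \mathbb{F}_d^{ab} \;\cong\; \omega(\mathbb{F}_d)^{\oplus d} \;\cong\; \mathbb{Z}[\mathbb{F}_d]^{\oplus d^2}
\]
as $\mathbb{Z}[\mathbb{F}_d]$-modules.

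Next I would trivialize the cocycle. Because $\mathbb{F}_d$ has cohomological dimension one, $H^2(\mathbb{F}_d, M) = 0$ for every $\mathbb{Z}[\mathbb{F}_d]$-module $M$; alternatively, since the surjection $\pi \colon {\rm Pol}_2(\mathbb{F}_d) \to \mathbb{F}_d$ admits set-theoretic preimages of the free generators, one may simply define a homomorphism $\mathbb{F}_d \to {\rm Pol}_2(\mathbb{F}_d)$ on generators by choosing any such preimages and extending by the universal property of the free group, obtaining a splitting of $\pi$ directly. Either way, the twisted crossed product of Theorem \ref{thm:pol2} collapses to the honest semidirect product ${\rm Pol}_2(\mathbb{F}_d) \cong \mathbb{Z}[\mathbb{F}_d]^{\oplus d^2} \rtimes \mathbb{F}_d$, proving the first assertion.

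For the abelianization I would invoke the elementary general fact that for any semidirect product $M \rtimes G$ with $M$ an abelian $G$-module one has
\[
(M \rtimes G)^{ab} \;\cong\; M_G \oplus G^{ab},
\]
where $M_G = M / \langle g\cdot m - m \rangle$ denotes the coinvariants: the commutators $[(0,g),(m,0)] = (g\cdot m - m, 0)$ are precisely the coinvariant relations, $[G,G]$ dies in $G^{ab}$, and $M$ is already abelian. In our case $(\mathbb{Z}[\mathbb{F}_d])_{\mathbb{F}_d} \cong \mathbb{Z}$ (the augmentation map has kernel $\omega(\mathbb{F}_d)$), so $M_G \cong \mathbb{Z}^{d^2}$, and $\mathbb{F}_d^{ab} \cong \mathbb{Z}^d$, giving the claimed $\mathbb{Z}^{d^2+d}$.

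The only real content lies in correctly identifying $\omega(\mathbb{F}_d)$ as a free $\mathbb{Z}[\mathbb{F}_d]$-module of rank $d$ and in observing that the cocycle vanishes in $H^2$; both follow from the freeness of $\mathbb{F}_d$, and after invoking them the rest is bookkeeping. A minor alternative is to bypass $H^2$ entirely and argue the splitting set-theoretically as above, which is perhaps the more concrete route to present in print.
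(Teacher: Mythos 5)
Your proof is correct and follows essentially the same route as the paper: identify $\omega(\mathbb F_d)\otimes_{\mathbb Z}\mathbb F_d^{ab}$ as $\mathbb Z[\mathbb F_d]^{\oplus d^2}$ via the standard free resolution and trivialize the $2$-cocycle using the vanishing of ${\rm H}^2$ for free groups; the paper leaves the abelianization computation implicit, whereas you correctly carry it out via the coinvariants $(\mathbb Z[\mathbb F_d])_{\mathbb F_d}\cong\mathbb Z$. Your alternative splitting argument (lifting the free generators and extending by the universal property of $\mathbb F_d$) is equally valid and a bit more elementary than invoking cohomological dimension.
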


\begin{proof} It is well-known that $\omega(\mathbb F_d) = \mathbb Z[\mathbb F_d]^{\oplus d}$ and hence $\omega(\mathbb F_d) \otimes_{\mathbb Z} \mathbb Z^d = \mathbb Z[\mathbb F_d]^{\oplus d^2}$ as $\mathbb F_d$-modules. Moreover, the cohomological dimension of $\mathbb F_d$ is less than two, i.e., all cohomology in dimension $2$ and above vanishes. Both of these facts are consequences of the existence of a free resolution
$$0 \to \mathbb Z[\mathbb F_d]^{\oplus d} \to \mathbb Z[\mathbb F_d] \stackrel{\varepsilon}{\to} \mathbb Z \to 0,$$
see, e.g., \cite{MR1324339}*{p.16}. Consequently, the class of the cocycle $$[\psi] \in {\rm H}^2(\mathbb F_d, \omega(\mathbb F_d) \otimes_{\mathbb Z} \mathbb F_d^{ab})$$ is trivial and the extension from Theorem \ref{thm:pol2} is split. 
This implies the claim.
\end{proof}

\begin{remark}
By Corollary \ref{cor:free}, ${\rm Pol}_2(\mathbb Z)$ is isomorphic to the lamplighter group $\mathbb Z \wr \mathbb Z$. It is well-known that this group is not nilpotent but residually nilpotent. As a consequence, for every $n \in \mathbb N$, there is a quadratic map $\varphi_n \colon \mathbb Z \to G_n$ with $G_n$ nilpotent of class $n$ and such that the image of $\varphi_n$ generates $G_n$.
\end{remark}

\begin{example}
The abelianization of the group ${\rm Pol}_2(\mathbb F_d)$ is $\mathbb Z^{d^2+d}$ by Corollary \ref{cor:free}, whereas the abelianization of ${\rm Pol}_2(\mathbb Z^d)$ is $\mathbb Z^{(d^2+3d)/2}$ by Corollary \ref{cor:abel}. The numbers in the exponent differ for $d \geq 2.$ This implies that there are exotic unital quadratic $\mathbb Z$-valued maps on $\mathbb F_2$ that do not factor through the abelianization $\mathbb F_2 \to \mathbb Z^2.$ One such map arises from the canonical homomorphism $\mathbb F_2$ to ${\rm H}_3(\mathbb Z) = \mathbb F_2/[\mathbb F_2,[\mathbb F_2,\mathbb F_2]]$ composed with the map ${\rm H}_3(\mathbb Z) \to \mathbb Z$ that sends the $3\times 3$-matrix to the entry in the upper right corner. More concretely, this map is given by the formula
$$\varphi(a^{n_1}b^{m_1} \cdots a^{n_k}b^{m_k}):= \sum_{i<j} m_in_j,$$
i.e., it counts the number of $b$'s on the left of $a$'s. To put this into perspective, Leibman \cite{MR1910931} showed that every unital quadratic map on any group with values in $\mathbb Z$ factors through a nilpotent quotient. 
\end{example}

\begin{proof}[Proof of Corollary \ref{cor:perfect}:]
By Theorem \ref{thm:pol2}, we have $$\ker(\pi \colon {\rm Pol}_2(G) \to G)= \omega(G) \otimes_{\mathbb Z} G^{ab}.$$ Hence, $\pi$ must be an isomorphism if $G$ is perfect. Thus, every unital quadratic map is a homomorphism. The general claim follows then by the inductive nature of the definition of the degree of a polynomial.
\end{proof}

Let's record another corollary of Theorem \ref{thm:pol2}.
\begin{corollary} \label{cor:finite}
If $G$ is finite, then ${\rm Pol}_2(G)$ is finite.
\end{corollary}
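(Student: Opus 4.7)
The plan is to read the structure of ${\rm Pol}_2(G)$ off Theorem \ref{thm:pol2} and observe that each of the two factors in the twisted crossed product is finite when $G$ is finite. By Theorem \ref{thm:pol2}, we have an isomorphism ${\rm Pol}_2(G) \cong (\omega(G) \otimes_{\mathbb{Z}} G^{ab}) \rtimes_{\psi} G$, so it suffices to show that the abelian group $\omega(G) \otimes_{\mathbb{Z}} G^{ab}$ is finite; finiteness of ${\rm Pol}_2(G)$ then follows since the underlying set of a semidirect product is the Cartesian product of the two factors.

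For the first step, recall that $\omega(G)$ is free abelian on the basis $\{g - 1 : g \in G, g \neq 1\}$, hence is a free abelian group of rank $|G| - 1$ when $G$ is finite. Tensoring with the finite abelian group $G^{ab}$ over $\mathbb{Z}$ then yields
\[
\omega(G) \otimes_{\mathbb{Z}} G^{ab} \;\cong\; (G^{ab})^{\oplus (|G|-1)}
\]
as an abelian group (ignoring the $G$-action, which is irrelevant for counting elements). This is a finite group of order $|G^{ab}|^{|G|-1}$. Combining with the finite group $G$ via the twisted crossed product, we obtain $|{\rm Pol}_2(G)| = |G^{ab}|^{|G|-1} \cdot |G|$, and in particular ${\rm Pol}_2(G)$ is finite.

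There is essentially no obstacle here: the whole content has been absorbed into the explicit description of ${\rm Pol}_2(G)$ provided by Theorem \ref{thm:pol2}. The only minor point worth flagging is to make clear that the tensor product is taken over $\mathbb{Z}$, so that freeness of $\omega(G)$ as an abelian group (rather than as a $\mathbb{Z}[G]$-module) is what enters the count.
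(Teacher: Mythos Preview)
Your proof is correct and follows the same approach as the paper: both invoke Theorem \ref{thm:pol2} and observe that $\omega(G)\otimes_{\mathbb Z} G^{ab}$ is finite (since $\omega(G)$ is a finitely generated abelian group and $G^{ab}$ is finite), whence the extension by $G$ is finite. You simply add the explicit count $|{\rm Pol}_2(G)| = |G^{ab}|^{|G|-1}\cdot |G|$, which the paper omits.
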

\begin{proof}
Indeed, $G^{ab}$ is finite, so that $\omega(G) \otimes_{\mathbb Z} G^{ab}$ is finite as well. It follows that ${\rm Pol}_2(G)$ is a finite group.
\end{proof}

\begin{remark}
While it is relatively easy to compute the abelianization of ${\rm Pol}_2(\mathbb Z/n \mathbb Z)$ from Corollary \ref{cor:abel}, it is more challenging to pin down ${\rm Pol}_2(\mathbb Z/n\mathbb Z)$. Clearly, Theorem \ref{thm:pol2} implies that ${\rm Pol}_2(\mathbb Z/n\mathbb Z)$ is a metabelian group of order $n^n$. We have ${\rm Pol}_2(\mathbb Z/2\mathbb Z) = \mathbb Z/4\mathbb Z$. One can compute ${\rm Pol}_2(\mathbb Z/3\mathbb Z)$ to be the Heisenberg group over the ring $\mathbb Z/3\mathbb Z.$
\end{remark}

\section{Classification of quadratic maps on abelian groups}\label{sec:abel}

In this section, we derive consequences of our main results for quadratic maps from an abelian group $G$ with target $H = S^1$. As an immediate consequence of Corollary~\ref{cor:abel}, Pontryagin duality, and Corollary~\ref{cor:finite}, we obtain the following description.

\begin{proposition}\label{prop:abel}
Let $G$ be an abelian group. The group ${\rm Quad}(G,S^1)$ is isomorphic to the Pontryagin dual of ${\rm Sym}^2(G)\rtimes_\sigma G$. In particular, if $G$ is finite, then there exists a non-canonical isomorphism of groups
\[
{\rm Quad}(G,S^1) \cong {\rm Sym}^2(G)\rtimes_\sigma G.
\]
\end{proposition}

We consider the preceding proposition as a computation of a quadratic Pontryagin dual of $G$. A natural question is to compute the higher-order Pontryagin dual beyond the quadratic case in concrete terms. This remains an interesting direction for future research.

\begin{example}\label{taylor}
    Let $d \geq 1$ be an integer. For $n = (n_1, \ldots, n_d) \in \mathbb{Z}^d$ and $j = (j_1, \ldots, j_d) \in \mathbb{Z}^d_{\geq 0}$, define the multi-binomial coefficient 
    \[
    B_j(n) \coloneqq \binom{n_1}{j_1} \cdots \binom{n_d}{j_d}. 
    \]
    Let $J$ denote the collection of $j = (j_1, \ldots, j_d) \in \mathbb{Z}^d_{\geq 0}$ such that $1 \leq \sum_{i=1}^d j_i \leq 2$. 
    We use the notation $\exp(x) \coloneqq e^{2\pi i x}$ to represent the fundamental character from $\mathbb{R}/\mathbb{Z}$ to $S^1$.  
    
    By the Taylor expansion formula for polynomial maps between filtered groups (see, e.g., \cite{gtz}*{Lemma B.9}, and its Appendix B for background on such polynomial maps), every quadratic $\varphi \in {\rm Quad}(\mathbb{Z}^d, S^1)$ is of the form 
    \[
    \varphi(n) = \prod_{j \in J} \exp(c_j)^{B_j(n)}
    \]    
    for some $c_j \in \mathbb{R}/\mathbb{Z}$, $j \in J$.  
    
    The cardinality of $J$, which is the number of basis polynomials $n \mapsto \exp(c_j)^{B_j(n)}$, is $(d^2 + 3d)/2$. 
    The abelianization of ${\rm Pol}_2(\mathbb{Z}^d)$ is $\mathbb{Z}^{(d^2+3d)/2}$, which is the Pontryagin dual of $(\mathbb{R}/\mathbb{Z})^{(d^2+3d)/2}$.
\end{example}

\begin{proposition}\label{prop:primary decomposition}
If $G$ is an abelian group of bounded exponent and $G=\oplus_p G_p$ is its primary decomposition, then
$${\rm Pol}_2^{ab}(G) = \oplus_p {\rm Pol}_2^{ab}(G_p).$$
\end{proposition}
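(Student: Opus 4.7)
The plan is to invoke Corollary~\ref{cor:abel} to realize ${\rm Pol}_2^{ab}(G)$ as the (abelian) twisted crossed product ${\rm Sym}^2(G) \rtimes_{\sigma} G$ with $\sigma(g,h)=gh$, and then show that both the kernel ${\rm Sym}^2(G)$ and the cocycle $\sigma$ respect the primary decomposition, so that the whole extension splits as a direct sum over primes $p$.

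First I would decompose the kernel. Writing $G=\bigoplus_p G_p$, one has
\[
G \otimes_{\mathbb Z} G \;=\; \bigoplus_{p,q} G_p \otimes_{\mathbb Z} G_q.
\]
Because $G$ has bounded exponent, each $G_p$ has exponent a power of $p$, so for $p\neq q$ the groups $G_p$ and $G_q$ have coprime annihilators and hence $G_p \otimes_{\mathbb Z} G_q = 0$. Therefore $G\otimes_{\mathbb Z} G = \bigoplus_p G_p\otimes_{\mathbb Z} G_p$, and passing to the symmetric quotient gives ${\rm Sym}^2(G)=\bigoplus_p {\rm Sym}^2(G_p)$.

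Next I would check that the 2-cocycle $\sigma$ is diagonal with respect to this decomposition. For $g=\sum_p g_p$ and $h=\sum_q h_q$ in $G$, one has in ${\rm Sym}^2(G)$
\[
\sigma(g,h) \;=\; g\cdot h \;=\; \sum_{p,q} g_p\cdot h_q \;=\; \sum_p g_p\cdot h_p,
\]
since all cross terms $g_p\cdot h_q$ with $p\neq q$ lie in $G_p\otimes_{\mathbb Z} G_q=0$. Thus $\sigma$ is the direct sum of the cocycles $\sigma_p\colon G_p\times G_p \to {\rm Sym}^2(G_p)$, each given by $\sigma_p(g,h)=gh$.

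Finally, noting that the action of $G$ on ${\rm Sym}^2(G)$ in this semidirect product is trivial (the crossed product is abelian, see Corollary~\ref{cor:abel}), the extension classified by $\sigma$ is a central extension, and a direct-sum decomposition of the cocycle yields a direct-sum decomposition of the corresponding abelian extension. Concretely,
\[
{\rm Pol}_2^{ab}(G) \;\cong\; {\rm Sym}^2(G) \rtimes_{\sigma} G \;\cong\; \bigoplus_p \bigl({\rm Sym}^2(G_p)\rtimes_{\sigma_p} G_p\bigr) \;\cong\; \bigoplus_p {\rm Pol}_2^{ab}(G_p),
\]
where the middle identification uses that $\sigma$ has no cross-prime components. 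The only genuine input beyond formal manipulation is the vanishing $G_p\otimes_{\mathbb Z} G_q=0$ for $p\neq q$; this is the place where the bounded-exponent hypothesis enters, and it is the step I would flag as most worth verifying carefully, since without it the cross terms in ${\rm Sym}^2(G)$ would not vanish and the cocycle $\sigma$ would fail to be block-diagonal.
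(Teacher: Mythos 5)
Your proof is correct and follows the same route as the paper, which simply cites the isomorphism ${\rm Sym}^2(G) \cong \bigoplus_p {\rm Sym}^2(G_p)$ as the key input; you have usefully spelled out why this holds (vanishing of $G_p \otimes_{\mathbb Z} G_q$ for $p \neq q$) and why the cocycle $\sigma$ is consequently block-diagonal, so the crossed product splits as a direct sum.
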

\begin{proof}
This is a basic consequence of the isomorphism ${\rm Sym}^2(G) = \oplus_p {\rm Sym}^2(G_p)$ in this case.
\end{proof}

\begin{proposition}\label{prop-split} 
If $G$ is a $2$-divisible finite abelian group, then the extension in Corollary \ref{cor:abel} is split.
\end{proposition}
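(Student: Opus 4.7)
The plan is to exhibit an explicit splitting of the cocycle $\sigma(g,h) = g\cdot h$ by taking a formal ``half of the square''. Since $G$ is finite and $2$-divisible, multiplication by $2$ is an automorphism of $G$, so $G$ has odd order, and consequently the same is true of any quotient of $G\otimes_{\mathbb{Z}} G$, in particular of ${\rm Sym}^2(G)$. Thus multiplication by $2$ is invertible on ${\rm Sym}^2(G)$, and I can unambiguously form the element $\tfrac{1}{2}(g\cdot g) \in {\rm Sym}^2(G)$ for every $g \in G$.

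The next step is to set
\[
s \colon G \to {\rm Sym}^2(G) \rtimes_{\sigma} G, \qquad s(g) := \left(\tfrac{1}{2}(g\cdot g),\, g\right),
\]
and verify that $s$ is a group homomorphism. Since $G$ acts trivially on ${\rm Sym}^2(G)$, the multiplication in the twisted crossed product reads $(\xi_1,g_1)(\xi_2,g_2) = (\xi_1+\xi_2+g_1\cdot g_2, g_1g_2)$. Bilinearity and symmetry of the symmetric product give the identity $(g+h)\cdot(g+h) = g\cdot g + 2\, g\cdot h + h\cdot h$ in ${\rm Sym}^2(G)$. Therefore
\[
\tfrac{1}{2}((g+h)\cdot(g+h)) - \tfrac{1}{2}(g\cdot g) - \tfrac{1}{2}(h\cdot h) = g\cdot h = \sigma(g,h),
\]
which is exactly the coboundary relation needed for $s$ to be multiplicative. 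Clearly $\pi\circ s = {\rm id}_G$, so $s$ is a section of the extension, proving that it splits.

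I expect no real obstacle here beyond making sure the $2$-divisibility hypothesis is used only where needed, namely to invert $2$ on ${\rm Sym}^2(G)$. The argument is essentially the observation that the cohomology class $[\sigma]\in H^2(G, {\rm Sym}^2(G))$ is killed by $2$ (because the diagonal coboundary $\delta(g\mapsto g\cdot g)$ equals $2\sigma$), and $H^2(G, {\rm Sym}^2(G))$ is annihilated by $|G|$ by standard corestriction--restriction, which is odd; the splitting $s$ is the constructive incarnation of this vanishing.
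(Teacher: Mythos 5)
Your proof is correct and takes essentially the same route as the paper, which also splits the extension by observing that $\nu(g)=\tfrac12(g\cdot g)$ satisfies $\sigma(g,h)=\nu(g+h)-\nu(g)-\nu(h)$, so that $\sigma$ is a coboundary. Your extra remarks (that $2$-divisibility forces $G$, hence ${\rm Sym}^2(G)$, to have odd order, which justifies dividing by $2$) merely make explicit what the paper leaves implicit.
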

\begin{proof}
Note that if $G$ is finite abelian and divisible by $2$, then
$\nu \colon G \to {\rm Sym}^2(G)$ given by $\nu(g)=g^2/2$ satisfies
$\sigma(g,h)=\nu(g+h) - \nu(g) - \nu(h)$. In particular, the 2-cocycle $\sigma$ is a coboundary and the extension in Corollary \ref{cor:abel} is split.
\end{proof}

\begin{corollary} \label{cor-split}
If $G$ is a finite abelian and $2$-divisible group, then ${\rm Pol}_2^{ab}(G)= {\rm Sym}^2(G) \oplus G$. This applies to all finite abelian groups without 2-primary part.
\end{corollary}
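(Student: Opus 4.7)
The plan is to combine Corollary \ref{cor:abel} with the preceding proposition and then exploit the fact that ${\rm Pol}_2^{ab}(G)$ is, by construction, abelian. From Corollary \ref{cor:abel} we have the extension
\[
1 \to {\rm Sym}^2(G) \to {\rm Pol}_2^{ab}(G) \to G \to 1
\]
with $2$-cocycle $\sigma(g,h) = gh$. The previous proposition shows that when $G$ is finite abelian and $2$-divisible, $\sigma$ is a coboundary (trivialized by $\nu(g) = g^2/2$), so the extension is split.

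Next I would observe that the induced action of $G$ on ${\rm Sym}^2(G)$ in this extension must be trivial: since ${\rm Pol}_2^{ab}(G)$ is abelian, conjugation by any lift of $g \in G$ acts trivially on the kernel ${\rm Sym}^2(G)$. A split extension of abelian groups with trivial action is a direct sum, giving ${\rm Pol}_2^{ab}(G) \cong {\rm Sym}^2(G) \oplus G$.

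For the final sentence, if $G$ is a finite abelian group without $2$-primary part, then $|G|$ is odd, so multiplication by $2$ is a bijection on $G$, meaning $G$ is $2$-divisible; hence the first part of the corollary applies. No real obstacle here, since the key computation, namely the splitting of $\sigma$ via $\nu(g) = g^2/2$, has already been carried out in the preceding proposition and the rest is just assembling pieces.
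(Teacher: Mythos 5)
Your proposal is correct and follows exactly the route the paper intends: the corollary is a direct consequence of the preceding proposition (the cocycle $\sigma$ is a coboundary via $\nu(g)=g^2/2$), combined with the observation that the extension is central because ${\rm Pol}_2^{ab}(G)$ is abelian, so a splitting yields a direct sum; the odd-order argument for the last sentence is also the standard one. No gaps.
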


In the remainder of this section, we prove Proposition \ref{classification}. 

Let $G$ be a $p$-primary group for some fixed prime $p$.  
Then $G$ is isomorphic to the direct sum $\bigoplus_{i=1}^d \mathbb{Z}/p^{n_i}\mathbb{Z}$ of cyclic groups of $p$-power order. By Proposition \ref{prop:abel}, we have the general description
\[
{\rm Quad}(G,S^1) \cong {\rm Sym}^2(G)\rtimes_\sigma G
\]
for the $2$-cocycle $\sigma\colon G\times G\to {\rm Sym}^2(G)$, given by $\sigma(g,h)=gh$. 

We can compute the symmetric second power of $G$ to be  
\[
{\rm Sym}^2(G) =  \bigoplus_{i=1}^d \mathbb{Z}/p^{n_i}\mathbb{Z} \oplus \bigoplus_{1\leq i<j\leq d} \mathbb{Z}/p^{\min(n_i,n_j)}\mathbb{Z},
\]
where we call the first summand the diagonal part and the second summand the off-diagonal part of ${\rm Sym}^2(G)$.
If $p\neq 2$, by Corollary \ref{cor-split}, we have directly:
\[
{\rm Sym}^2(G)\rtimes_\sigma G = \bigoplus_{i=1}^d \mathbb{Z}/p^{n_i}\mathbb{Z} \oplus \bigoplus_{1\leq i<j\leq d} \mathbb{Z}/p^{\min(n_i,n_j)}\mathbb{Z} \oplus \bigoplus_{i=1}^d \mathbb{Z}/p^{n_i}\mathbb{Z}
\] and this finishes the proof in this case.

\begin{remark}\label{tz}
Note that for odd primes and $G=(\mathbb{Z}/p\mathbb{Z})^d$, we obtain 
\[
{\rm Sym}^2(G)\rtimes_\sigma G = (\mathbb{Z}/p\mathbb{Z})^{\frac{d(d+3)}{2}}.
\]
This recovers, in the quadratic case, the description of Tao and Ziegler in \cite{tz2}*{Lemma 1.7}. Tao and Ziegler describe polynomial maps in terms of a closed-form formula for functions from $G$ to $S^1$. It is not difficult to read off from their functional description our above description in terms of a finite abelian group. 
\end{remark}

Now, we focus on $2$-primary groups $G$, in which case the computation is more subtle due to the existence of the twist. We start with the cyclic case. 

\begin{lemma}\label{lemB}
    Suppose $G=\mathbb{Z}/2^n\mathbb{Z}$. Then
    $
    {\rm Sym}^2(G)\rtimes_\sigma G 
    $
    is isomorphic to 
    $
    \mathbb{Z}/2^{n-1}\mathbb{Z} \oplus \mathbb{Z}/2^{n+1}\mathbb{Z}.
    $
\end{lemma}

\begin{proof}
    Note that ${\rm Sym}^2(G)\rtimes_\sigma G=\mathbb{Z}/2^n\mathbb{Z}\rtimes_\sigma \mathbb{Z}/2^n\mathbb{Z}$ is a finite abelian group with $2^{2n}$ elements. We prove the isomorphism by constructing two cyclic subgroups of $\mathbb{Z}/2^n\mathbb{Z}\rtimes_\sigma \mathbb{Z}/2^n\mathbb{Z}$ with orders $2^{n-1}$ and $2^{n+1}$ whose intersection is trivial. 
    
    Let $g$ denote the generator of $\mathbb{Z}/2^n\mathbb{Z}$. We claim that $$(0,g)^k=(\tbinom{k}{2} g^2, kg)$$ for every $k\geq 1$. Indeed, by induction, suppose that $(0,g)^{k-1}=(\binom{k-1}{2} g^2, (k-1)g)$ holds. Then 
    \[
    (0,g)^{k-1}(0,g)= (\tbinom{k-1}{2} g^2 + (k-1)g^2, (k-1)g+g) = (\tbinom{k}{2} g^2, kg),
    \]
    proving the claim. 
    Furthermore, we claim that $$(2g^2,2g)^k=(2k^2 g^2, 2k g)$$ for every $k\geq 1$. Again, by induction, suppose that $(2g^2,2g)^{k-1}=(2(k-1)^2 g^2,2(k-1)g)$ holds. Then 
    \[
         (2g^2,2g)^{k-1}(2g^2,2g)=(2(k-1)^2 g^2+ 2g^2+ 4(k-1)g^2,2kg) = (2k^2 g^2,(2k)g).
    \]
    Since the least $k$ such that $2^n$ divides $\binom{k}{2}$ is $2^{n+1}$, the order of $(0,g)$ is  $2^{n+1}$. Since the least $k$ such that $2^n$ divides $2k$ is $2^{n-1}$, the order of $(2g^2,2g)$ is $2^{n-1}$ . Since the only solution to $\binom{2k}{2}=2k^2$ is $k=0$, it follows that the cyclic subgroups generated by $(0,g)$ and $(2g^2,2g)$, respectively, have trivial intersection. 
\end{proof}

Having settled the cyclic case, we consider now the case $G= (\mathbb{Z}/2^{n}\mathbb{Z})^d$ with $n,d\geq 1$, Then, the group ${\rm Sym}^2(G)\rtimes_\sigma G$
has order $(2^n)^{\frac{d(d+3)}{2}}$.
If $gh$ is a basis element in the off-diagonal part of ${\rm Sym}^2(G)$, then $(gh,0)$ has order $2^n$ in ${\rm Sym}^2(G)\rtimes_\sigma G$. 
If $g$ is a basis element of $G$, by Lemma \ref{lemB}, $(0,g)$ has order $2^{n+1}$ in ${\rm Sym}^2(G)\rtimes_\sigma G$ and $(2g^2,2g)$ has order $2^{n-1}$ in ${\rm Sym}^2(G)\rtimes_\sigma G$. We can conclude, similarly to the proof of Lemma \ref{lemB}, on noting that cyclic groups of different basis elements intersect trivially,  that 
\[
  {\rm Sym}^2(G)\rtimes_\sigma G = (\mathbb{Z}/2^{n-1}\mathbb{Z})^d  \oplus (\mathbb{Z}/2^{n}\mathbb{Z})^{\binom{d}{2}}\oplus (\mathbb{Z}/2^{n+1}\mathbb{Z})^d.
 \]
This recovers, in the quadratic case, the classification of Tao and Ziegler in \cite{tz2}*{Lemma 1.7} for $G= (\mathbb{Z}/2\mathbb{Z})^d$.  

We can now prove the remaining part of Proposition \ref{classification}. Indeed, for general $G$, we have 
\[
{\rm Sym}^2(G)\rtimes_\sigma G = \left(\bigoplus_{i=1}^d \mathbb{Z}/2^{n_i}\mathbb{Z} \oplus \bigoplus_{1\leq i<j\leq d} \mathbb{Z}/2^{\min(n_i,n_j)}\mathbb{Z} \right)\rtimes_\sigma \bigoplus_{i=1}^d \mathbb{Z}/2^{n_i}\mathbb{Z}.
\]
Taking a basis element $gh$ in the off-diagonal part of ${\rm Sym}^2(G)$, say the generator of $\mathbb{Z}/2^{\min(n_i,n_j)}\mathbb{Z}$, then $(gh,0)$ has order $2^{\min(n_i,n_j)}$ in ${\rm Sym}^2(G)\rtimes_\sigma G$. By Lemma \ref{lemB}, if $g$ is a basis element of $G$, say the generator of $\mathbb{Z}/2^{n_i}\mathbb{Z}$, then $(0,g)$ has order $2^{n_i+1}$ in ${\rm Sym}^2(G)\rtimes_\sigma G$ and  $(2g^2,2g)$ has order $2^{n_i-1}$ in ${\rm Sym}^2(G)\rtimes_\sigma G$. Arguing similarly as in Lemma \ref{lemB} and the previous case, we conclude that 
\[
{\rm Sym}^2(G)\rtimes_\sigma G= \bigoplus_{i=1}^d \mathbb{Z}/2^{n_i-1}\mathbb{Z} \oplus \bigoplus_{1\leq i<j\leq d} \mathbb{Z}/2^{\min(n_i,n_j)}\mathbb{Z} \oplus \bigoplus_{i=1}^d \mathbb{Z}/2^{n_i+1}\mathbb{Z}.
\]

\section{Approximate polynomial maps}
\label{sec:apppol}

This is the first of two sections where we study the case of perfect groups more closely. We start with an investigation of approximate polynomial maps with values in groups that carry a bi-invariant metric. Let's recall the following definition:

\begin{definition}
We say that a perfect group $G$ is of commutator width $k$, if each element of $G$ is a product of at most $k$ commutators.
\end{definition}

\begin{remark}
\label{rem:simple}
Note that a finite group is automatically of finite commutator width. For interesting classes of finite perfect groups there exist uniform bounds on the commutator width. For example, as mentioned already in the introduction, Wilson \cite{wilson} showed that the commutator width is uniformly bounded for \emph{all} finite simple groups. In fact, the commutator width is now known to be exactly $1$, as established by the more recent solution to the Ore problem \cite{ore}.
See also \cite{nikolov} for even broader classes of finite perfect groups of bounded commutator width. It was shown by Holt--Plesken \cite{holtplesken}*{Lemma 2.1.10}, that the commutator width of a finite perfect group can become arbitrarily large. This also implies that an infinite perfect group does not need to have finite commutator width.
\end{remark}

We will now derive a useful consequence of Corollary \ref{cor:perfect}:

\begin{corollary} \label{cor:bound} Let $k \in \mathbb N.$
Let $G$ be a perfect group of commutator width $k$. Consider the free group on the set $\varphi(G)$. There exists a constant $c:=c(k)\geq 0$, such that for every $g,h \in G$, the element $\varphi(g_1g_2)\varphi(g_2)^{-1} \varphi(g_1)^{-1}$ is a product of $c$ conjugates of elements of the form
$$\varphi(g_1g_2g_3)^{-1} \varphi(g_1g_2)\varphi(g_2)^{-1}\varphi(g_1)^{-1} \varphi(g_1g_3)\varphi(g_3)^{-1}\varphi(g_2g_3)$$
or their inverses for $g_1,g_2,g_3 \in G$ .
\end{corollary}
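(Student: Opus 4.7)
The plan is to quantify the qualitative fact, used in Corollary~\ref{cor:perfect}, that every unital quadratic map on a perfect group is a homomorphism. Write $F$ for the free group on the set $\varphi(G)$ and $N$ for the normal closure of the relators~(\ref{eq:rel}), so that $F/N = {\rm Pol}_2(G) \cong G$ by Corollary~\ref{cor:perfect}. The element $\psi_{g_1}(g_2) := \varphi(g_1g_2)\varphi(g_2)^{-1}\varphi(g_1)^{-1}$ is sent to $g_1 g_2 g_2^{-1} g_1^{-1} = 1$ in $G$ and hence lies in $N$ automatically; the content of the corollary is the uniform bound $c(k)$ on the number of conjugates of relators needed to realize it.

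The starting observation is that in $F$ the relator $R(g_1,g_2,g_3)$ differs by a single conjugation from $\psi_{g_1}(g_2)\psi_{g_1}(g_3)\psi_{g_1}(g_2g_3)^{-1}$, as is visible by rearranging the calculation at the end of the proof of Theorem~\ref{thm:func}. In particular each application of ``$\psi_{g_1}$ is a homomorphism'' costs exactly one conjugate of a relator, and the auxiliary identities $\varphi(1) = 1$ and $\psi_{g_1}(z^{-1}) = \psi_{g_1}(z)^{-1}$ follow from $R(1,1,1) = \varphi(1)^{-1}$ together with one further application of multiplicativity, costing $O(1)$ relators. Using the commutator width to write $g_2 = [x_1,y_1] \cdots [x_k,y_k]$ and iterating multiplicativity then expresses $\psi_{g_1}(g_2)$ as $\prod_{i=1}^k \psi_{g_1}([x_i,y_i])$ modulo $O(k)$ conjugates of relators.

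It remains to bound by an \emph{absolute} constant the conjugate-length of $\psi_{g_1}([x,y])$ for arbitrary $x,y,g_1$. Applying multiplicativity a bounded number of times, rewrite this element as $\varphi(g_1)\,[\beta_{g_1}(x),\beta_{g_1}(y)]\,\varphi(g_1)^{-1}$. To kill this commutator in $F/N$, unwind the proof of Proposition~\ref{prop:comm}: the cocycle identity~(\ref{eq:cocycbeta}) specialized to $(k_1,k_2) = (g_1,g_1^{-1})$, combined with $\varphi(1) = 1$, expresses the pointwise inverse of $\beta_{g_1}$ as a conjugate of $\beta_{g_1^{-1}}$, which is itself a homomorphism; Lemma~\ref{lem:comm}(ii) then forces $[\beta_{g_1}(x),\beta_{g_1}(y)] = 1$ in $F/N$ through a bounded number of applications of the homomorphism property. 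Each such algebraic step is, in $F$, equivalent to finitely many instances of~(\ref{eq:rel}) applied to explicit inputs, so the total cost is absolute and independent of $g_1,x,y,G$. Combined with the previous paragraph this yields $c(k) = O(k)$.

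The main obstacle is purely bookkeeping: for each algebraic identity one must exhibit the explicit product of conjugates of relators that realizes it in $F$ and tally the constants. Conceptually nothing new is required beyond Proposition~\ref{prop:comm} and Lemma~\ref{lem:comm}; the contribution of the corollary lies entirely in the linear-in-$k$ quantification.
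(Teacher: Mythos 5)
Your argument is correct in outline, but it takes a genuinely different route from the paper. The paper's proof is soft: since $G$ is perfect, Corollary~\ref{cor:perfect} shows that the normal closure of the multiplicativity relators $\varphi(g_1g_2)\varphi(g_2)^{-1}\varphi(g_1)^{-1}$ and the normal closure of the relators~\eqref{eq:rel} coincide in the free group on $\varphi(G)$, which gives for each individual element \emph{some} finite product of conjugates of relators; the uniformity of the bound $c(k)$ is then extracted by an ultraproduct argument (commutator width $\leq k$ passes to ultraproducts), so no explicit value of $c(k)$ is obtained --- indeed the remark at the end of Section~\ref{sec:apppol} states that the authors could not derive one. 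You instead make the qualitative argument of Proposition~\ref{prop:comm} effective: one conjugate of a relator per instance of multiplicativity of some $\psi_g$ (note that $\psi_{g_1}(g_2)\psi_{g_1}(g_3)\psi_{g_1}(g_2g_3)^{-1}$ is literally a conjugate of the relator~\eqref{eq:rel}, and $R(1,1,1)=\varphi(1)^{-1}$ is itself a relator), $O(1)$ conjugates per instance of the cocycle identity~\eqref{eq:cocycbeta}, hence $O(1)$ conjugates for each identity $[\beta_{g_1}(x),\beta_{g_1}(y)]=1$, and finally $O(k)$ after decomposing $g_2$ into $k$ commutators. The one step you must write out with care is the effective proof that the pointwise inverse of $\beta_{g_1}$ is multiplicative: this has to go through $\beta_{g_1}(g)^{-1}\equiv\varphi(g_1^{-1})\beta_{g_1^{-1}}(g)\varphi(g_1^{-1})^{-1}$ (two relators, via~\eqref{eq:cocycbeta} at $(g_1,g_1^{-1})$ together with $\varphi(1)=1$) and the multiplicativity of $\beta_{g_1^{-1}}$, and \emph{not} through the commutativity of the range of $\beta_{g_1}$, which is the very thing being proved; done this way the count is non-circular and absolute. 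If the bookkeeping is completed, your proof is strictly stronger than the paper's --- it would produce an explicit linear bound $c(k)=O(k)$ and thereby answer the question the authors leave open --- so the products of conjugates should be exhibited explicitly rather than asserted to exist "by finitely many instances of~\eqref{eq:rel}".
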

\begin{proof} We consider subsets $\Sigma_1 := \{\varphi(g_1g_2)\varphi(g_2)^{-1} \varphi(g_1)^{-1} \mid g_1,g_2 \in G \}$ and $\Sigma_2 := \{\varphi(g_1g_2g_3)^{-1} \varphi(g_1g_2)\varphi(g_2)^{-1}\varphi(g_1)^{-1} \varphi(g_1g_3)\varphi(g_3)^{-1}\varphi(g_2g_3) \mid g_1,g_2,g_3 \in G\}$ of the free group generated by $\varphi(G)$.
Since $G$ is assumed to be perfect, by Corollary \ref{cor:perfect}, both sets of relations define the same normal subgroup, i.e., $\langle\! \langle \Sigma_1 \rangle\!\rangle = \langle\! \langle \Sigma_2 \rangle\!\rangle$. Thus, each word of the form $\varphi(g_1g_2)\varphi(g_2)^{-1} \varphi(g_1)^{-1}$ for $g_1,g_2 \in G$ must be a product of conjugates of elements of the form $\varphi(g_1g_2g_3)^{-1} \varphi(g_1g_2)\varphi(g_2)^{-1}\varphi(g_1)^{-1} \varphi(g_1g_3)\varphi(g_3)^{-1}\varphi(g_2g_3)$ or their inverses. The uniform estimate $c(k)$ on the number of factors follows from an easy ultra-product argument noting that the commutator width is preserved by the operation of taking an ultra-product of groups.
\end{proof}

Our focus is now on almost structure preserving maps to metric groups. Let $(H,\partial)$ be a metric group. It is of some interest in geometry as well as algebra to consider maps which are almost homomorphisms $\varphi \colon G \to H$ in the sense that
$$\partial(\varphi(gh),\varphi(g)\varphi(h))< \varepsilon$$ 
uniformly for all $g,h \in G$. This has been studied in various situations, see for example \cites{MR367862, MR693352, MR3733361, MR3867328, MR4634678}) and the references therein. Note that
$$(\Delta_{h}(\Delta_{g} \varphi))(1) = (\Delta_{g} \varphi)(h)(\Delta_{g} \varphi)(1)^{-1} = \varphi(gh) \varphi(h)^{-1} \varphi(g)^{-1}$$
for unital maps, so that being an almost homomorphism as above can be easily expressed in terms of a condition on iterated finite differences of $\varphi$. Theorem \ref{thm:approx} generalizes this idea and studies arbitrary iterated differences of $\varphi$. 

\begin{proof}[Proof of Theorem \ref{thm:approx}]
The proof proceeds by induction on $d$, where the case $d=1$ is trivial. The case $d=2$ requires special attention. Let's compute:
\begin{eqnarray*}
(\Delta_{g_1,g_2,g_3} \varphi)(g_0) &=& (\Delta_{g_1}(\Delta_{g_2,g_3}  \varphi))(g_0) \\
&=& (\Delta_{g_2,g_3}  \varphi)(g_1g_0) ((\Delta_{g_2,g_3}  \varphi)(g_0))^{-1} \\
&=&\varphi(g_3g_2g_1g_0) \varphi(g_2g_1g_0)^{-1} \varphi(g_1g_0) \varphi(g_3g_1g_0)^{-1} \\
&& \cdot (\varphi(g_3g_2g_0) \varphi(g_2g_0)^{-1} \varphi(g_0) \varphi(g_3g_0)^{-1})^{-1} \\
&=&\varphi(g_3g_2g_1g_0) \varphi(g_2g_1g_0)^{-1} \varphi(g_1g_0) \varphi(g_3g_1g_0)^{-1} \\
&& \cdot \varphi(g_3g_0)\varphi(g_0)^{-1}\varphi(g_2g_0)\varphi(g_3g_2g_0)^{-1}
\end{eqnarray*}
Thus, for $\varphi$ unital, we obtain:
\begin{equation*}
(\Delta_{g_1,g_2,g_3} \varphi)(1)=
\varphi(g_3g_2g_1) \varphi(g_2g_1)^{-1} \varphi(g_1) \varphi(g_3g_1)^{-1} \varphi(g_3)\varphi(g_2)\varphi(g_3g_2)^{-1}.
\end{equation*}
By our assumption, we obtain
$$\partial(\varphi(g_3g_2g_1) \varphi(g_2g_1)^{-1} \varphi(g_1) \varphi(g_3g_1)^{-1} \varphi(g_3)\varphi(g_2)\varphi(g_3g_2)^{-1},1) \leq \varepsilon$$ for all $g_1,g_2,g_3 \in G$. Note that the same estimate holds for conjugates and inverses by bi-invariance of the metric. Thus, Corollary \ref{cor:bound} implies that $\partial(\varphi(g_1g_2)\varphi(g_2)^{-1}\varphi(g_1)^{-1},1)\leq c(k) \varepsilon$ and equivalently
$$\partial((\Delta_{g_1,g_2}\varphi)(1),1) \leq c(k) \varepsilon.$$
As required, we conclude that $\varphi$ is a uniform $c(k)\varepsilon$-polynomial of degree $1$. The general case follows by induction. Indeed, note that for $d \geq 2$, $\varphi$ is a uniform $\varepsilon$-polynomial of degree $d+1$ if and only if $\Delta_g \varphi$ is a uniform $\varepsilon$-polynomial of degree $d$ for all $g \in G.$
\end{proof}

Note that the previous result is particularly useful when a stability result is present for uniform $\varepsilon$-homomorphisms, which is the case in various situations, see the references above. In those cases, every $\varepsilon$-polynomial for $\varepsilon>0$ small enough is uniformly close to a homomorphism.

A typical example where this happens is the group ${\rm U}(n)$ with the 2-norm and $G$ amenable, see \cite{MR3867328}. Here, for $a \in  M_n \mathbb C$ we denote by $\|a\|_2 = ((1/n)\sum_{ij} |a_{ij}|^2)^{\nicefrac12}$ the 2-norm or normalized Hilbert-Schmidt norm. Thus, we obtain:

\begin{theorem}
Let $d,k \geq 0$. For all $\varepsilon>0$, there exists $\delta:=\delta(d,k)>0$, such that the following holds: Let $G$ be an amenable group of commutator width $k$ and let $\varphi \colon G \to {\rm U}(n)$ be a unital uniform $\delta$-polynomial of degree $d$. Then there exists $n' \in [n,(1+\varepsilon)n] \cap \mathbb N$ and a homomorphism $\beta \colon G \to {\rm U}(n')$, such that
$\sup_g \|(\varphi(g) \oplus 1_{n'-n}) - \beta(g)\|_2 < \varepsilon.$
\end{theorem}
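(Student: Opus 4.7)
The plan is to bootstrap Theorem~\ref{thm:approx} with the Ulam-stability result of De~Chiffre--Glebsky--Lubotzky--Thom~\cite{MR3867328} for approximate homomorphisms of amenable groups into the unitary groups with the normalized Hilbert--Schmidt distance. Since the 2-norm on ${\rm U}(n)$ is bi-invariant, both inputs are available on the same footing.

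First I would invoke Theorem~\ref{thm:approx}. Given a unital uniform $\delta$-polynomial $\varphi\colon G\to{\rm U}(n)$ of degree $d$ and recalling that $G$ has commutator width $k$, Theorem~\ref{thm:approx} upgrades $\varphi$ to a uniform $c(k)^{d-1}\delta$-homomorphism, where the constant $c(k)$ depends only on the commutator width. Crucially, the resulting bound is independent of $n$, which is essential for the next step. (If $d=0$ there is nothing to prove, and if $d=1$ the polynomial condition already is the homomorphism condition.)

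Next I would apply the amenable stability theorem of~\cite{MR3867328}: for any amenable group $G$ and any $\varepsilon>0$ there is $\eta(\varepsilon)>0$, \emph{independent of $n$}, such that every unital uniform $\eta(\varepsilon)$-homomorphism $\psi\colon G\to{\rm U}(n)$ admits $n'\in[n,(1+\varepsilon)n]\cap\mathbb{N}$ and a genuine homomorphism $\beta\colon G\to {\rm U}(n')$ with $\sup_g\|(\psi(g)\oplus 1_{n'-n})-\beta(g)\|_2<\varepsilon$. Setting
\[
\delta:=\delta(d,k,\varepsilon):=\frac{\eta(\varepsilon)}{c(k)^{d-1}}
\]
(with the convention $c(k)^{-1}:=1$ if $d=0$), any unital uniform $\delta$-polynomial of degree $d$ is first promoted to a uniform $\eta(\varepsilon)$-homomorphism by Theorem~\ref{thm:approx}, and then rounded to a homomorphism $\beta$ within $\varepsilon$ in the 2-norm (after the allowed dimension padding) by the stability theorem. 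This yields precisely the conclusion.

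The one point that needs a remark rather than additional work is the uniformity in $n$: both the constant $c(k)$ coming from Corollary~\ref{cor:bound} (proved there via an ultraproduct argument) and the constant $\eta(\varepsilon)$ from~\cite{MR3867328} (proved via tracial von Neumann algebra / ultraproduct techniques for amenable groups) are $n$-free, so their composition produces an $n$-free $\delta$ as required. There is no genuine obstacle beyond correctly assembling these two black boxes, since all the real work—reducing iterated differences to the cocycle identity for perfect groups of bounded commutator width, and stabilizing approximate representations of amenable groups—has already been carried out in Theorem~\ref{thm:approx} and in~\cite{MR3867328} respectively.
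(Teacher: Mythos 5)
Your proposal matches the paper's argument exactly: the theorem is obtained by composing Theorem~\ref{thm:approx} (which turns a uniform $\delta$-polynomial on a group of commutator width $k$ into a uniform $c(k)^{d-1}\delta$-homomorphism, with constants independent of $n$) with the uniform stability theorem for approximate unitary representations of amenable groups in the normalized Hilbert--Schmidt norm from \cite{MR3867328}, choosing $\delta$ accordingly. The only slip is bibliographic: \cite{MR3867328} is De~Chiffre--Ozawa--Thom, not De~Chiffre--Glebsky--Lubotzky--Thom, but the result you invoke is the correct one.
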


\begin{remark}
The previous corollary and theorem applies uniformly to all finite simple groups and fixed $d \in \mathbb N.$ An example of an infinite perfect amenable group with bounded commutator width is the group ${\rm Alt}_{\rm fin}(\mathbb N)$ consisting of all alternating permutations of $\mathbb N$ with finite support. In this case any homomorphism to ${\rm U}(n)$ is trivial by Jordan's theorem.
\end{remark}

\begin{remark}
We did not succeed to derive an explicit bound for $c(k)$, even in the case $k=1$. 
\end{remark}

\section{An inverse theorem for matrix-valued Gowers norms}
\label{sec:gowers}

We recall the definition of the Gowers norms of arbitrary degree for functions on a finite group with values in $M_n \mathbb C$.  
We set for a matrix valued function $\varphi \colon G \to M_n \mathbb C$
$$(\Delta_g \varphi)(h) := \varphi(gh)\varphi(h)^*.$$
Note that if the function takes values in the group of unitaries, then this is just the old finite difference.
We define 
$$\|\varphi\|^{2^k}_{U^k} :=\mathbb E_{g_0,g_1,\dots,g_{k}} {\rm tr}((\Delta_{g_1,\dots,g_k}\varphi)(g_{0})),$$
where $\Delta_{g_1,\dots,g_k} := \Delta_{g_1} \circ \cdots \circ \Delta_{g_k}$, ${\rm tr} \colon M_n \mathbb C \to \mathbb C$ denotes the normalized trace, and $\mathbb E$ denotes the expectation with respect to the normalized counting measure on $G^{k+1}$. 

For $k=2$, we compute
\begin{eqnarray*}
(\Delta_{g_1,g_2} \varphi)(g_0) &=& 
(\Delta_{g_2} \varphi)(g_1g_0) (\Delta_{g_2} \varphi)(g_0)^{*} \\
&=& \varphi(g_2g_1g_0) \varphi(g_1g_0)^{*} \varphi(g_0) \varphi(g_2g_0)^{*}
\end{eqnarray*}
so that
$${\mathbb E}_{g_0,g_1,g_{2}} {\rm tr}((\Delta_{g_1,g_2} \varphi)(g_0)) =  {\mathbb E}_{xy^{-1}zw^{-1}=1} {\rm tr}(\varphi(x)\varphi(y)^{*}\varphi(z)\varphi(w)^{*}).$$
We see from this equality that our definition agrees with the $U^2$-Gowers norm as defined in \cite{MR3733361}, modulo the normalization of the trace. The assignment $\varphi \mapsto \|\varphi\|_{U^k}$ is called \emph{$k$-th Gowers norm}, see Appendix \ref{app:gowers} for more details on its properties.

We start with the proof of Theorem \ref{thm:gowers}. First of all, let us note that we may assume without loss of generality that the function $\varphi$ takes unitary values and $\varphi(1)=1_n.$ Indeed, this is a consequence of the following two lemmas.

\begin{lemma} \label{lem:unitary}
Let $k \geq 1$ be a natural number. For all $\varepsilon>0$ there exists $\delta>0$ such that if
$v_1,\dots,v_k \in M_n \mathbb C$ are contractions with 
$${\rm tr}(v_1 \cdots v_k) \geq 1-\delta,$$
then there exists unitaries $u_1,\dots,u_k \in {\rm U}(n)$ such that $\|v_i - u_i\|_2 < \varepsilon$ for all $1 \leq i \leq k$ and
$u_1 \cdots u_k=1_n.$
\end{lemma}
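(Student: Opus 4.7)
The plan is a three-step approximation: first produce a partial isometry close to each $v_i$ via polar decomposition, then extend it to a unitary with small error, and finally adjust the last factor so that the product equals $1_n$ exactly.

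The crucial observation is that the single hypothesis $\mathrm{tr}(v_1\cdots v_k) \geq 1-\delta$ already controls each factor individually. I would argue $|\mathrm{tr}(v_1\cdots v_k)| \leq \|v_1\cdots v_k\|_1/n \leq \|v_i\|_1/n = \mathrm{tr}(|v_i|)$, using that the trace norm is non-increasing under multiplication by contractions; combined with $\mathrm{tr}(|v_i|^2) = \mathrm{tr}(v_i^*v_i) \leq 1$, this yields
$$\||v_i| - 1_n\|_2^2 = \mathrm{tr}(|v_i|^2) - 2\mathrm{tr}(|v_i|) + 1 \leq 2\delta$$
for every $i$, and the same computation applied to the product itself gives $\|v_1\cdots v_k - 1_n\|_2^2 \leq 2\delta$. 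Writing $v_i = w_i|v_i|$ for the polar decomposition, I then obtain $\|v_i - w_i\|_2 = \|w_i(|v_i|-1_n)\|_2 \leq \sqrt{2\delta}$. Since the kernel of $|v_i|$ has fractional dimension at most $2\delta$ (on that kernel $(1-|v_i|)^2 = 1_n$), I can pick any isometry from $\ker|v_i|$ onto $(\mathrm{ran}\,w_i)^\perp$ and add it to $w_i$, producing a unitary $\tilde u_i$ with $\|\tilde u_i - w_i\|_2^2 \leq 2\delta$, hence $\|\tilde u_i - v_i\|_2 \leq 2\sqrt{2\delta}$.

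A routine telescoping with all factors of operator norm at most $1$ gives $\|\tilde u_1\cdots\tilde u_k - v_1\cdots v_k\|_2 \leq \sum_{i=1}^k \|\tilde u_i - v_i\|_2$, so $W := \tilde u_1\cdots\tilde u_k$ satisfies $\|W - 1_n\|_2 = O_k(\sqrt{\delta})$. I would then set $u_i := \tilde u_i$ for $i<k$ and $u_k := (\tilde u_1\cdots\tilde u_{k-1})^{-1}$, which forces $u_1\cdots u_k = 1_n$ automatically; left-invariance of the $2$-norm under the unitary $\tilde u_1\cdots\tilde u_{k-1}$ yields $\|u_k - \tilde u_k\|_2 = \|1_n - W\|_2 = O_k(\sqrt{\delta})$. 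Thus every $\|u_i - v_i\|_2$ is $O_k(\sqrt{\delta})$, and choosing $\delta$ small enough finishes the argument. The only step requiring real thought is the passage from one trace bound on the full product to simultaneous $2$-norm bounds on every individual $|v_i|$; everything else is polar-decomposition bookkeeping.
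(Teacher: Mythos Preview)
Your argument is correct. The key inequality $1-\delta \leq |\mathrm{tr}(v_1\cdots v_k)| \leq \mathrm{tr}(|v_i|)$ via the trace-norm submultiplicativity $\|abc\|_1 \leq \|a\|\,\|b\|_1\,\|c\|$ is exactly what is needed to pass from control of the product to control of each factor, and the subsequent polar-decomposition and telescoping steps are routine and correctly executed. You even get an explicit dependence $\delta \sim c\,\varepsilon^2/k^2$.

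This is a genuinely different route from the paper's. The paper argues by contradiction and passes to a tracial ultraproduct $(M,\tau)$ of the matrix algebras: there the limiting contractions satisfy $\tau(v_1\cdots v_k)=1$, which forces $v_1\cdots v_k=1$; then each $v_i$ has a contractive inverse, hence is unitary, and one lifts back to sequences of unitaries (defining $u_k$ as the inverse of the product of the first $k-1$, exactly as you do). The ultraproduct proof is soft and works verbatim in any tracial von Neumann algebra, but gives no quantitative $\delta(\varepsilon,k)$. Your direct approach is more elementary, stays entirely in finite dimensions, and is effective; the price is that it relies on the finite-dimensional fact that $\ker|v_i|$ and $(\mathrm{ran}\,w_i)^{\perp}$ have equal dimension, so the partial isometry extends to a unitary---in an infinite factor one would phrase this slightly differently, but the estimate $\mathrm{tr}(p_{\ker|v_i|}) \leq 2\delta$ you derive is exactly what makes that extension cheap in either setting.
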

\begin{proof}
Suppose by contradiction that for some $\varepsilon>0$, there exists sequences of contractions
$(v_{i,m})_m$ for $1 \leq i \leq k$ with $v_{i,n} \in M_{n_m} \mathbb C$ such that
$${\rm tr}(v_{1,m} \cdots v_{k,m}) \geq 1 - \frac1m$$ and at the same time, there do not exist unitaries $u_1,\dots,u_k$ which are $\varepsilon$-close to $v_1,\dots,v_k$ in the 2-norm. 

Consider the tracial ultraproduct of von Neumann algebras.
$$(M,\tau) := \prod_{m \to \mathcal U} (M_{n_m} \mathbb C,{\rm tr}).$$
It is well-known that $(M,\tau)$ is a tracial von Neumann algebra, in fact it is a factor of type II$_1$. Consider the contractions $v_i \in M$ represented by the sequences $(v_{i,m})_m$. From our assumption, we obtain
$$\tau(v_1 \cdots v_k)=1.$$
Now, let $x \in M$ be an arbitrary contraction with $\tau(x)=1.$
Since,
$\|1-x\|^2_2 = 1 + \tau(x^*x) - \tau(x) - \tau(x^*) \leq 0$
we obtain $x=1.$ In particular, $v_1$ is a contraction whose inverse is also a contraction, being equal to $v_2\cdots v_k$. This implies that $v_1$ is isometric and hence unitary. By induction, we conclude that $v_1,\dots,v_k$ are all unitaries. We conclude that $v_i$ can also be represented by a sequence of unitaries $(u_{i,m})_m$ with $$\lim_{m \to \mathcal U} \|u_{i,m} - v_{i,m}\|_2 = 0 \quad \mbox{for} \quad 1 \leq i \leq k-1.$$ We put $u_{i,k}:=(u_{i,1}\cdots u_{i,k-1})^{-1}$. This is a contradiction and thus, proves the claim.
\end{proof}

\begin{lemma} Let $k \in \mathbb N$ and $\varepsilon>0$. There exists $\delta:=\delta(k,\varepsilon)>0$, such that the following is true: Let $G$ be a finite group and let $\varphi \colon G \to M_n \mathbb C$ with
$$\|\varphi\|_{\infty} \leq 1 \quad \mbox{and} \quad \|\varphi\|^{2^k}_{U^k} > 1 -\delta.$$
There exists a function $\varphi' \colon G \to {\rm U}(n)$ such that
$$\mathbb E_g \|\varphi(g) - \varphi'(g)\|^2_2 < \varepsilon \quad \mbox{and} \quad \|\varphi'\|^{2^k}_{U^k} > 1 -\varepsilon.$$
\end{lemma}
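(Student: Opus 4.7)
The plan is to define $\varphi'(g)\in {\rm U}(n)$ as a pointwise unitarization of $\varphi(g)$: from the singular value decomposition $\varphi(g)=\sum_i s_i(g) u_i(g)v_i(g)^*$ with $s_i(g)\in[0,1]$, replace each singular value $s_i(g)$ by $1$, extending arbitrarily to a unitary on the kernel of $\varphi(g)$. Equivalently, $\varphi'(g)$ is any unitary extension of the partial isometry in the polar decomposition $\varphi(g)=U(g)|\varphi(g)|$.

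The first step is to extract from the Gowers-norm hypothesis the pointwise average estimate $\mathbb{E}_g\|\varphi(g)\|_2^2\geq 1-\delta$. Applying the identity $(\Delta_{g_1,\ldots,g_k}\varphi)(g_0)=(\Delta_{g_2,\ldots,g_k}\varphi)(g_1g_0)\bigl((\Delta_{g_2,\ldots,g_k}\varphi)(g_0)\bigr)^{\!*}$ and making the substitution $x=g_1g_0$, $y=g_0$ yields the identity
\[
\|\varphi\|_{U^k}^{2^k}=\mathbb{E}_{g_2,\ldots,g_k}\bigl\|\mathbb{E}_h(\Delta_{g_2,\ldots,g_k}\varphi)(h)\bigr\|_2^2,
\]
which in particular shows non-negativity of the Gowers cube. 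Writing $(\Delta_{g_2,\ldots,g_k}\varphi)(h)$ as a product of $2^{k-1}$ factors, each of operator norm at most one and exactly one of which equals $\varphi(h)$ or $\varphi(h)^*$, the submultiplicative bound $\|AB\|_2\leq\|A\|_\infty\|B\|_2$ gives $\|(\Delta_{g_2,\ldots,g_k}\varphi)(h)\|_2\leq\|\varphi(h)\|_2$. Jensen's inequality then gives $\|\varphi\|_{U^k}^{2^k}\leq\mathbb{E}_h\|\varphi(h)\|_2^2$, so the hypothesis yields $\mathbb{E}_g\|\varphi(g)\|_2^2\geq 1-\delta$.

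For the pointwise $L^2$-distance, a direct singular-value computation gives $\|\varphi(g)-\varphi'(g)\|_2^2=1+\|\varphi(g)\|_2^2-\tfrac{2}{n}\sum_i s_i(g)$, and since $s_i(g)\in[0,1]$ forces $s_i(g)^2\leq s_i(g)$, we conclude $\|\varphi(g)-\varphi'(g)\|_2^2\leq 1-\|\varphi(g)\|_2^2$; averaging yields $\mathbb{E}_g\|\varphi(g)-\varphi'(g)\|_2^2\leq\delta$.

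Finally, to establish $\|\varphi'\|_{U^k}^{2^k}\geq 1-\varepsilon$ for $\delta$ sufficiently small, I would expand both $U^k$-cubes as a trace of a product of $2^k$ factors and telescope one factor at a time, writing the difference as a sum of $2^k$ integrals, each containing exactly one factor of the form $(\varphi-\varphi')(\cdot)$ or $(\varphi-\varphi')(\cdot)^*$ evaluated at a uniformly distributed argument, with the remaining factors of operator norm at most $1$. Applying $|{\rm tr}(X)|\leq\|X\|_2$, $\|AB\|_2\leq\|A\|_\infty\|B\|_2$, and Cauchy--Schwarz yields
\[
\bigl|\|\varphi\|_{U^k}^{2^k}-\|\varphi'\|_{U^k}^{2^k}\bigr|\leq 2^k\bigl(\mathbb{E}_g\|\varphi-\varphi'\|_2^2\bigr)^{1/2}\leq 2^k\sqrt{\delta}.
\]
Choosing $\delta$ so that $\delta<\varepsilon$ and $\delta+2^k\sqrt{\delta}<\varepsilon$ concludes the proof. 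The main technical point is the pull-out bound $\|(\Delta_{g_2,\ldots,g_k}\varphi)(h)\|_2\leq\|\varphi(h)\|_2$, which requires locating the single factor $\varphi(h)$ (up to adjoint) inside the iterated finite-difference product and exploiting the operator-norm bounds on the remaining factors.
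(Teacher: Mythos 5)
Your proof is correct, and it takes a genuinely different route from the paper's. The paper proves this lemma via a soft compactness argument: an ultraproduct lemma stating that contractions $v_1,\dots,v_k$ with ${\rm tr}(v_1\cdots v_k)\geq 1-\delta$ are each $\varepsilon$-close to unitaries, combined with a Markov-inequality argument locating a high-density set $\Sigma\subseteq G$ on which $\varphi(g)$ participates in a near-extremal product; $\varphi'$ is then defined by unitarizing on $\Sigma$ and setting $\varphi'(g)=1_n$ off $\Sigma$. You instead unitarize pointwise everywhere via the polar decomposition and drive the whole argument through the single inequality $\|\varphi\|_{U^k}^{2^k}\leq \mathbb{E}_g\|\varphi(g)\|_2^2$, obtained from the identity $\|\varphi\|_{U^k}^{2^k}=\mathbb{E}_{g_2,\dots,g_k}\|\mathbb{E}_h(\Delta_{g_2,\dots,g_k}\varphi)(h)\|_2^2$ together with the pull-out bound isolating the unique factor evaluated at $h$ (all steps check out: the substitution $(g_0,g_1)\mapsto(g_0,g_1g_0)$ is measure-preserving, the inductive structure of $\Delta_{g_2,\dots,g_k}\varphi$ does exhibit one factor $\varphi(h)^{(*)}$ among operator-norm contractions, the singular-value computation $\|\varphi(g)-\varphi'(g)\|_2^2\leq 1-\|\varphi(g)\|_2^2$ is right, and the telescoping bound $2^k\sqrt{\delta}$ on the change of Gowers norm is standard). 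What your approach buys is an explicit, effective dependence $\delta=\delta(k,\varepsilon)$ of roughly $\min(\varepsilon, \varepsilon^2 4^{-k})$ order, whereas the paper's ultraproduct lemma yields no computable bound; it is also self-contained, avoiding von Neumann algebra machinery. The paper's compactness lemma, on the other hand, is reused conceptually elsewhere and handles the more general situation of arbitrary near-extremal products of contractions, not just those arising from Gowers cubes.
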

\begin{proof} We have 
$$\|\varphi\|^{2^{k}}_{U^k} = \mathbb E_{g_0,g_2,g_3,\dots,g_{k}} {\rm tr}((\Delta_{g_1,\dots,g_k}\varphi)(g_{0})) > 1 -\delta$$ while $(\Delta_{g_1,\dots,g_k}\varphi)(g_{0})$ is a product of $2^k$ contractions from the set $\varphi(G)$. The lower bound implies that the set $\Sigma \subset G$ of those $g \in G$ for which $\varphi(g)$ appears in such a product whose trace is at least $1 -\delta^{\nicefrac12}$ is of density at least $1 -\delta^{\nicefrac12}$. By Lemma \ref{lem:unitary}, provided $\delta>0$ is small enough, any such $\varphi(g)$ can be replaced by a unitary $\varphi'(g)$ such that $\|\varphi(g)-\varphi(g')\|_2< \delta'.$ For all $g \not \in \Sigma,$ we define $\varphi'(g)=1_n.$

Now, note that the set of $(k+1)$-tuples $g_0,\dots,g_k$ such that all contractions arising in factors of $(\Delta_{g_1,\dots,g_k}\varphi)(g_{0})$ lie in $\Sigma$ is of density at least $1-2^k\delta^{\nicefrac12}$. For those $(k+1)$-tuples, we obtain
$$\|(\Delta_{g_1,\dots,g_k}\varphi)(g_{0}) - (\Delta_{g_1,\dots,g_k}\varphi')(g_{0})\|_2 \leq 2^k \delta'$$
and hence
$$|{\rm tr}((\Delta_{g_1,\dots,g_k}\varphi)(g_{0}) - (\Delta_{g_1,\dots,g_k}\varphi')(g_{0}))| \leq 2^{k+1} \delta'.$$
Finally, we conclude
$$\left| \|\varphi\|_{U^k}^{2^k} - \|\varphi'\|^{2^k}_{U^k} \right| < 2^{k+1} \delta' + 2^{k+1}\delta^{\nicefrac12}.$$
Now, $\delta'>0$ is as small as we wish, when $\delta$ is small. And $\delta>0$ can be taken small enough so that also the second summand does not contribute. This proves the claim.
\end{proof}

Before we start with the actual proof of Theorem \ref{thm:gowers}, let us recall that the case $k=2$ is a consequence of \cite{MR3733361}*{Theorem 5.6}. Indeed, assuming that $\varphi$ takes values in ${\rm U}(n)$, we compute
$$\|(\varphi(g) \oplus 1_{n'-n}) - \beta(g)\|^2_2 = 2 - 2 \Re \langle \varphi(g) \oplus 1_{n'-n}, \beta(g) \rangle,$$
so that large correlation of unitary-valued maps is directly tied to a small 2-norm of the difference.

\begin{proof}[Proof of Theorem \ref{thm:gowers}] We may assume that $\varphi$ takes values in unitaries and upon replacing $\varphi$ by $g \mapsto \varphi(1)^{*} \varphi(g)$, we may assume that $\varphi$ is unital.

We prove the claim by induction on $k$, where the case $k=2$ was proved in \cite{MR3733361} as noted above. So let $k \geq 3$, $\delta>0$ and $\varphi \colon G \to {\rm U}(n)$ be arbitrary and assume that $\|\varphi\|^{2^k}_{U^k}  \geq 1 - \varepsilon$ for some $\varepsilon>0$ to be determined later.  Now, the inequality just says that 
$$\mathbb E_{g_0,g_2,g_3,\dots,g_{k}} {\rm tr}((\Delta_{g_1,\dots,g_k}\varphi)(g_{0})) = \|\varphi\|^{2^k}_{U^k}  \geq 1 - \varepsilon.$$

Note that $\|\varphi\|_{\infty} \leq 1$ and we conclude by Markov's inequality, that the set $A \subseteq G$ of those $g \in G$, such that
$$\mathbb E_{g_0,g_2,g_3,\dots,g_{k-1}} {\rm tr}((\Delta_{g_1,\dots,g_{k-1},g}\varphi)(g_{0})) = \|\Delta_{g}\varphi\|^{2^{k-1}}_{U^{k-1}}  \geq 1 - \varepsilon^{\nicefrac12}$$
has cardinality at least $(1- \varepsilon^{\nicefrac12})|G|.$  

Let $\eta>0$ to be determined later. For $\varepsilon>0$ such that $\varepsilon^{\nicefrac12} \leq \varepsilon(\eta,k-1,k_0)$, by induction, we find $n' \in [n,(1+ \eta)n] \cap \mathbb N$, homomorphisms $\beta_g \colon G \to {\rm U}(n')$ such that
$$\mathbb E_h \|(\varphi(g)^*(\Delta_g\varphi)(h) \oplus 1_{n'-n}) - \beta_g(h)\|^2_2 < \eta$$
for all $g \in A.$ 

For $g \in A$, again by Markov's inequality, there exists a subset $B(g) \subseteq G$ of density at least $1-\eta^{\nicefrac12}$ such that $\|(\varphi(g)^*(\Delta_g\varphi)(h) \oplus 1_{n'-n}) - \beta_g(h)\|_2< \eta^{\nicefrac14}$ for all $h \in B(g).$

Let us consider $g \in A \cap A^{-1}$. For ease of notation, we will write $u\stackrel{\nu}{=} v$ if $\|u-v\|_2< \nu.$
Now, by the proof of Equation \eqref{eq:cocycbeta}, we obtain
$$1_{n'} \stackrel{2\eta^{\nicefrac14}}{=} \varphi(g)^{-1} \beta_{g^{-1}}(h) \varphi(g) \cdot \beta_g(h)$$
for all $h \in g^{-1}B(g^{-1}) \cap B(g)$.
Thus, if $h_1,h_2,h_2h_1 \in g^{-1}B(g^{-1}) \cap B(g)$, we obtain
\begin{eqnarray*}
\beta_g(h_1h_2) &=& \beta_g(h_1)\beta_g(h_2) \\
&\stackrel{4\eta^{\nicefrac14}}{=}& (\varphi(g)^{-1} \beta_{g^{-1}}(h_1) \varphi(g))^{-1}(\varphi(g)^{-1} \beta_{g^{-1}}(h_2) \varphi(g))^{-1} \\
&=& (\varphi(g)^{-1} \beta_{g^{-1}}(h_2)\beta_{g^{-1}}(h_1) \varphi(g))^{-1}\\
&=& (\varphi(g)^{-1} \beta_{g^{-1}}(h_2h_1) \varphi(g))^{-1} \\
&\stackrel{2\eta^{\nicefrac14}}{=}& \beta_g(h_2h_1).
\end{eqnarray*}

We conclude from this computation that the set $C(g)$ of pairs $(h_1,h_2) \in G\times G$ with $$\beta_g([h_1,h_2]) \stackrel{6\eta^{\nicefrac14}}{=} 1_{n'}$$ has density at least $1-6\eta^{\nicefrac12}.$ If $1-6\eta^{\nicefrac12}>\nicefrac12$, then $C(g)^2=G \times G.$ Indeed, we have $C(g)^{-1}= C(g)$ and under this condition on the intersection $(h_1,h_2)C(g)^{-1} \cap C(g)$ must be non-empty for each $(h_1,h_2) \in G \times G$. Note also that commutators satisfy the following universal identities:
$$[ab,cd]=[ab,c]\cdot [ab,d]^{c} = [a,c]^b[b,c][a,d]^{bc}[b,d]^c$$
for all $a,b,c,d \in G$. Thus, we obtain from $C(g)^2 = G \times G$ that 
$\beta_g([h_1,h_2]) \stackrel{24\eta^{\nicefrac14}}{=} 1_{n'}$
for all $h_1,h_2 \in G.$

Since $G$ has commutator width bounded by $k_0$, this implies $\|\beta_g(h) - 1_{n'}\| < 24k_0 \eta^{\nicefrac14}$ for all $h \in G.$
Thus, $\Delta_g \varphi$ is almost constant for $g \in A \cap A^{-1}$. Since $A \cap A^{-1}$ has density at least $1- 2\varepsilon^{\nicefrac12}$ and $\|\varphi\|_{\infty}\leq 1$, this implies that $\|\varphi\|_{U^2}$ is close to $1$. Indeed, we obtain
$$\|\varphi\|^4_{U^2} = \mathbb E_{g_0,g_1,g_2} {\rm tr}( \Delta_{g_1,g_2} \varphi)(g_0)) 
\geq 1 - 48k_0 \eta^{\nicefrac14} - 4\varepsilon^{\nicefrac12}.$$
Thus, taking $\eta$ and $\varepsilon$ sufficiently small, so that $48k_0 \eta^{\nicefrac14} + 4\varepsilon^{\nicefrac12} \leq \varepsilon(\delta,2,k_0)$  we are done by the results of Gowers--Hatami for the case $k=2$.
\end{proof}

\begin{remark}
The result of Gowers--Hatami \cite{MR3733361}*{Theorem 5.6} for $k=2$ is more general since it is also an inverse theorem in the 1\% regime, i.e., a positive correlation with a linear map is derived only from $\|\varphi\|_{U^2} \geq \varepsilon.$ We were not able to generalize this to hold for arbitrary $k \geq 3$.
\end{remark}

\appendix
\section{The Gowers norms on matrix valued functions}
\label{app:gowers}
In this appendix, extending results from \cite{MR3733361} for the case $k=2$, we study basic properties for the Gowers norms on matrix-valued functions on a finite group.  Motivated by the combinatorial structure of iterated finite differences, we define non-abelian cubes as follows:
\begin{eqnarray*}
c_0&=&(1),\\
c_1(g_1)&=&(g_1,1),\\
c_2(g_1,g_2)&=&(g_1g_2,g_2,1,g_1),\\
c_3(g_1,g_2,g_3)&=&(g_1g_2g_3,g_2g_3,g_3,g_1g_3,g_1,1,g_2,g_1g_2),
\end{eqnarray*}
where each $c_k$ is a list of $2^{k}$ words in the variables $g_1,\dots,g_k$.
In general, we set:
$$c_{k+1}(g_1,\dots,g_{k+1}) = (c_k(g_1,\dots,g_k)g_{k+1},\bar c_k(g_1,\dots,g_k)),$$
where $\bar c$ denotes the reversed list. We write $$c_k(g_1,\dots,g_k)=(c_{1,k}(g_1,\dots,g_k),\dots,c_{2^k,k}(g_1,\dots,g_k)).$$
\begin{definition}
Let $G$ be a finite group $n,k \in \mathbb N$. Let $f_i \colon G \to M_n \mathbb C$ for $1 \leq i \leq 2^k$. The Gowers inner product is defined as:
$$\left((f_i)_{i=1}^{2^k} \right):= \mathbb E_{g_0,\dots, g_k} {\rm tr} \left(\prod_{i=1}^{2^k} f_i\left((c_{i,k}(g_1,\dots, g_k) g_0)\right)^{*^{i-1}} \right),$$
where $\prod$ denotes the left-ordered product, ${\rm tr}$ denotes the normalized trace, and $x^*$ is the adjoint of $x \in M_n \mathbb C$.
\end{definition} 

Let $\varphi \colon \{1,\dots,2^k\} \to \{1,\dots,2^k\}$ be a function. We define the functions
$$L(i) =\begin{cases} i & 1 \leq i \leq 2^{k-1} \\ 2^k+1-i & 2^{k-1}+1 \leq i \leq 2^k.\end{cases}$$ and $$R(i) =\begin{cases} 2^k+1-i & 1 \leq i \leq 2^{k-1} \\ i & 2^{k-1}+1 \leq i \leq 2^k.\end{cases}$$
and set $\varphi':= \varphi \circ L$ and $\varphi'' := \varphi \circ R.$
Moreover, we define 
$$(S\varphi)(i) = \begin{cases}
\varphi(2^k) & i=1 \\
\varphi(i-1) & 2 \leq i \leq 2^{k} \end{cases}$$
and $\bar \varphi(i) := 2^k +1 -i.$

It is a basic fact that $((f_{\varphi(i)})_{i=1}^{2^k}) \geq 0$ if $\varphi = \bar \varphi$. The following lemma relates the Gowers inner product to the Gowers norms.

\begin{lemma} Let $G$ be a finite group, $n,k \in \mathbb N$. Let $f \colon G \to M_n \mathbb C$. Then, we have
$$\|f\|_{U^k} = \left((f)_{i=1}^{2^k}\right)^{\nicefrac1{2^k}}.$$
\end{lemma}

\begin{lemma} \label{lem:gowfaith}
Let $G$ be a finite group, $n,k \in \mathbb N$. Let $f \colon G \to M_n \mathbb C$. If $k \geq 2$, then
$\|f\|_{U^k}=0$ implies $f=0.$
\end{lemma}
\begin{proof}
We prove the claim by induction starting with $k=2$. Note that 
\begin{eqnarray*}
\|f\|^4_{U^2} &=& \mathbb E_{g_0,g_1,g_2} {\rm tr}(f(g_1g_2g_0)f(g_2g_0)^* f(g_0)f(g_1g_0)^* )\\
&=& \mathbb E_{g_0,g_1,g_2} {\rm tr}(f(g_1g_2)f(g_2)^* f(g_0)f(g_1g_0)^* )\\
&=& \mathbb E_{g_1}   \|\mathbb E_{g} f(g_1g)f(g)^*\|^2_{2} 
\end{eqnarray*}
In particular, $\|f\|_{U^2}=0$ implies $\mathbb E_{g} f(g_1g)f(g)^*=0$ for all $g_1 \in G$. Thus, considering $g_1=1$, we conclude $f$ is zero. Hence, $\|.\|_{U^2}$ is faithful on the space of matrix-valued functions on $G$. 

Let's now assume that the claim is already established for $k-1$ and consider the $k$-th Gowers norm. It follows from the definition of the Gowers norms that
$\|f\|^{2^k}_{U^k} = \mathbb E_g \|\Delta_g f\|^{2^{k-1}}_{U^{k-1}}.$ In particular, if $\|f\|_{U^k}=0$, then $\Delta_g f =0$ for all $g \in G$ by induction. Since $(\Delta_1 f)(h) = f(h)f(h)^*$, this implies that $f$ is zero.
\end{proof}

\begin{lemma}
Let $G$ be a finite group, $n,k \in \mathbb N$. Let $f_i \colon G \to M_n \mathbb C$ for $1 \leq i \leq 2^k$ and let $\varphi \colon \{1,\dots,n\} \to \{1,\dots,n\}$ be some function. Then, we have

\begin{eqnarray}
|((f_{\varphi(i)})_{i=1}^{2^k})|&\leq & ((f_{\varphi'(i)})_{i=1}^{2^k})^{1/2} \label{claim1}((f_{\varphi''(i)})_{i=1}^{2^k})^{1/2} \\
((f_{(S\varphi)(i)})_{i=1}^{2^k}) &=& \overline{((f_{\varphi(i)})_{i=1}^{2^k})} \label{claim2}
\end{eqnarray}
\end{lemma}
\begin{proof}
First of all, note that it is enough to prove the inequality in \eqref{claim1} for $\varphi$ being the identity function.

In the following computation, we use the shorthand $\vec{g} = (g_1,\dots,g_{k-1})$ and write for $m:=2^{k-1}$ and functions $f_i \colon G \to M_k \mathbb C$ for $1 \leq i \leq m$:
$$f_{i_1,\dots,i_{m}}(h_1,\dots,h_{m}) := \prod_{i=1}^{m} f_{i_1}(h_i)^{*^{i}}.$$ Note that in this notation, we have
$$(f_{i_1,\dots,i_{m}}(h_1,\dots,h_m))^* = f_{i_{m},\dots,i_1}(h_{m},\dots,h_1).$$
Claim (1) is a consequence of the usual Cauchy-Schwarz inequality carefully applied:
\begin{eqnarray*}  |(f_{i})_{i=1}^{2^k})|  &=& \mathbb |\mathbb E_{\vec{g},g_0,g_k} {\rm tr}(f_{1,\dots,2^{k-1}}(c_{k-1}(\vec{g}g_k g_0) f_{2^{k-1}+1,\dots,2^{k}}(\bar c_{k-1}(\vec{g}g_0))|\\
&=& |\mathbb E_{\vec{g},g_0,g_k} {\rm tr}(f_{1,\dots,2^{k-1}}(c_{k-1}(\vec{g}g_k) f_{2^{k-1}+1,\dots,2^{k}}(\bar c_{k-1}(\vec{g}g_0))|\\
&=& |\mathbb E_{\vec{g}} {\rm tr}\left( (\mathbb E_{g_k} f_{1,\dots,2^{k-1}}(c_{k-1}(\vec{g}g_k)) \cdot (\mathbb E_{g_0} f_{2^{k},\dots,2^{k-1}+1,}( c_{k-1}(\vec{g}g_0))^* \right) |\\
&\leq& \left ( \mathbb E_{\vec{g}} {\rm tr}(|\mathbb E_{g_k}f_{1,\dots,2^{k-1}}(c_{k-1}(\vec{g}g_k)|^2) \right)^{1/2} \cdot \\
&& \left( \mathbb E_{\vec{g}} {\rm tr}(|\mathbb E_{g_0} f_{2^{k},\dots,2^{k-1}+1,}(c_{k-1}(\vec{g}g_0)|^2) \right)^{1/2} \\
&=& \left ( \mathbb E_{\vec{g}} {\rm tr}(\mathbb E_{g_k,g'_0}f_{1,\dots,2^{k-1}}(c_{k-1}(\vec{g}g_k)f_{1,\dots,2^{k-1}}(c_{k-1}(\vec{g}g'_0)^*) \right)^{1/2} \cdot \\
&& \left( \mathbb E_{\vec{g}} {\rm tr}(\mathbb E_{g_0,g'_k} f_{2^{k},\dots,2^{k-1}+1,}(c_{k-1}(\vec{g}g_0)f_{2^{k},\dots,2^{k-1}+1,}( c_{k-1}(\vec{g}g'_k)^*) \right)^{1/2} \\
&=& \left ( \mathbb E_{\vec{g},g_k,g'_0} {\rm tr}(f_{1,\dots,2^{k-1}}(c_{k-1}(\vec{g}g_k)f_{2^{k-1},\dots,1}(\bar c_{k-1}(\vec{g}g'_0)) \right)^{1/2} \cdot \\
&& \left( \mathbb E_{\vec{g},g_0,g'_k} {\rm tr}( f_{2^{k},\dots,2^{k-1}+1,}( c_{k-1}(\vec{g}g_0)f_{2^{k-1}+1,\dots,2^{k}}( \bar c_{k-1}(\vec{g})g'_k)) \right)^{1/2} \\
&=& ((f_{L(i)})_{i=1}^{2^k})^{1/2}((f_{R(i)})_{i=1}^{2^k})^{1/2}.
\end{eqnarray*}
Claim \eqref{claim2} is an immediate consequence of the trace property, i.e., ${\rm tr}(xy)={\rm tr}(yx)$, and the fact that the trace of the hermitean conjugate of a matrix is just the complex conjugate of the trace of the matrix.
\end{proof}

\begin{proposition}[Gowers-Cauchy-Schwarz inequality] Let $G$ be a finite group, $n,k \in \mathbb N$. Let $f_i \colon G \to M_n \mathbb C$ for $1 \leq i \leq 2^k$. Then
$$|((f_i)_{i=1}^{2^k})| \leq \prod_{i=1}^{2^k} \|f_i\|_{U^k}.$$
\end{proposition}
\begin{proof}
Let's first assume that $\|f_i\|_{U^k} = 1$ for all $1 \leq i \leq 2^k.$ We prove the stronger claim
$|((f_{\varphi(i)})_{i=1}^{2^k})| \leq 1$ for all functions $\varphi \colon \{1,\dots,2^k\} \to \{1,\dots 2^k\}.$ 
Clearly, the claim is true for all constant functions by definition of the Gowers norm.

Set $\alpha(\varphi):=|((f_{\varphi(i)})_{i=1}^{2^k})|$ and $\alpha := \max_{\varphi} \alpha(\varphi).$ Let $\Phi$ be the set of functions $\varphi$ that realize the maximum. If $\varphi$ is a maximizer, it follows from the previous lemma that
$\alpha(\varphi) \leq \alpha(\varphi')^{1/2} \alpha(\varphi'')^{1/2} \leq \alpha(\varphi')^{1/2} \alpha(\varphi)^{1/2}$. Thus, $\alpha(\varphi) = \alpha(\varphi')$ and $\varphi'$ is another maximizer; hence $\Phi$ is closed under the operation $\varphi \mapsto \varphi'$. It is also clear that the set of maximizers is closed under $S$. Now, the chain
$\varphi \mapsto S\varphi' \mapsto S^2(S \varphi')' \mapsto S^4(S^2(S \varphi')')' \mapsto \cdots$
has the effect that after $i$ steps, the values of the first $2^{i}$ entries are equal to $\varphi(1).$ Thus, after $k$ steps, the resulting function is $c_{\varphi(1)}$, i.e, the constant function with value equal to $\varphi(1).$ This proves that
$\alpha = \alpha(\varphi) = \alpha(c_{\varphi(1)})=1$ and we are done. After scaling, this also covers the case when $\|f_i\|_{U^k} \neq 0$ for all $1 \leq i \leq 2^k.$ If $\|f_i\|_{U^k}=0$ for one of the $f_i's$, then we are done by Lemma \ref{lem:gowfaith}.
\end{proof}

\begin{corollary} Let $G$ be a finite group, $n,k \in \mathbb N$. Let $f \colon G \to M_n \mathbb C$. We have:
$\|f\|_{U^{k+1}} \geq \|f\|_{U^k}$
\end{corollary}
\begin{proof} This follows from the computation
$$\|f\|^{2^k}_{U^k} =  (\underbrace{f,\dots,f}_{2^k},\underbrace{1,\dots,1}_{2^k})  \leq \|f\|^{2^k}_{U^{k+1}} \cdot \|1\|^{2^k}_{U^{k+1}} = \|f\|^{2^k}_{U^{k+1}},$$
where we used the Gowers-Cauchy-Schwarz inequality for the $(k+1)$th Gowers inner product.
\end{proof}

Finally, we are able to prove the triangle inequality.
\begin{corollary}
Let $G$ be a finite group, $n,k \in \mathbb N$. Let $f_1,f_2 \colon G \to M_n \mathbb C$. We have
$$\|f_1 + f_2\|_{U^k} \leq \|f_1\|_{U^k} + \| f_2\|_{U^k}.$$
In particular, for $k \geq 2$, $f \mapsto \|f\|_{U^k}$ defines a norm on the space of matrix-valued functions on $G$.
\end{corollary}
\begin{proof}
This part of the argument is unchanged in comparison to the more classical instances of the triangle inequality for the Gowers norm. Indeed, we have
\begin{eqnarray*}
\|f_1+f_2\|^{2^k}_{U^k} &=& ((f_1+f_2)_{i=1}^{2^k}) \\
&=& \sum_{\varphi \colon \{1,\dots,2^k\} \to \{1,2\}} ((f_{\varphi(i)})_{i=1}^{2^k}) \\
& \leq& \sum_{\varphi \colon \{1,\dots,2^k\} \to \{1,2\}} 
 \|f_1\|^{|\varphi^{-1}(1)|}_{U^k} \|f_2\|^{|\varphi^{-1}(2)|}_{U^k} \\
&=& \sum_{m=0}^{2^k} \binom{2^k}{m} \|f_1\|_{U^k}^m \|f_2\|_{U^k}^{2^k-m} \\
&=& (\|f_1\|_{U^k} + \|f_2\|_{U^k})^{2^k} 
\end{eqnarray*}
In order to obtain that $f \mapsto \|f\|_{U^k}$ is indeed a norm, we only need Lemma \ref{lem:gowfaith} and observe that $\|\lambda f\|_{U^k} = |\lambda| \|f\|_{U^k}$ holds, which is obvious.
\end{proof}

\section*{Acknowledgments}
Asgar Jamneshan acknowledges funding by the Deutsche Forschungsgemeinschaft (DFG, German Research Foundation) - 547294463. 
Andreas Thom acknowledges funding by the Deutsche Forschungsgemeinschaft (SPP 2026 ``Geometry at infinity''). We are grateful to the anonymous referee for constructive feedback and many helpful remarks, which helped improve the exposition of the paper.  

\begin{bibdiv}
\begin{biblist}

\bib{MR4634678}{article}{
  author={Becker, Oren},
  author={Chapman, Michael},
  title={Stability of approximate group actions: uniform and probabilistic},
  journal={J. Eur. Math. Soc. (JEMS)},
  volume={25},
  date={2023},
  number={9},
  pages={3599--3632},
}

\bib{MR1324339}{book}{
  author={Brown, Kenneth S.},
  title={Cohomology of groups},
  series={Graduate Texts in Mathematics},
  volume={87},
  note={Corrected reprint of the 1982 original},
  publisher={Springer-Verlag, New York},
  date={1994},
  pages={x+306},
}

\bib{MR3867328}{article}{
  author={De Chiffre, Marcus},
  author={Ozawa, Narutaka},
  author={Thom, Andreas},
  title={Operator algebraic approach to inverse and stability theorems for amenable groups},
  journal={Mathematika},
  volume={65},
  date={2019},
  number={1},
  pages={98--118},
}

\bib{g1}{article}{
  author={Gowers, Timothy},
  title={A new proof of Szemerédi's theorem},
  journal={Geom. Funct. Anal.},
  volume={11},
  number={3},
  pages={465--588 (2001); erratum 11, no. 4, 869},
  year={2001},
}

\bib{MR3733361}{article}{
  author={Gowers, Timothy},
  author={Hatami, Omid},
  title={Inverse and stability theorems for approximate representations of finite groups},
  language={Russian, with Russian summary},
  journal={Mat. Sb.},
  volume={208},
  date={2017},
  number={12},
  pages={70--106},
  translation={
    journal={Sb. Math.},
    volume={208},
    date={2017},
    number={12},
    pages={1784--1817},
  },
}

\bib{gtz}{article}{
  author={Green, Ben},
  author={Tao, Terence},
  author={Ziegler, Tamar},
  title={An inverse theorem for the Gowers $U^{k+1}[N]$-norm},
  journal={Ann. Math.},
  volume={176},
  year={2012},
  pages={1231--1372},
}

\bib{MR367862}{article}{
  author={Grove, Karsten},
  author={Karcher, Hermann},
  author={Ruh, Ernst A.},
  title={Group actions and curvature},
  journal={Bull. Amer. Math. Soc.},
  volume={81},
  date={1975},
  pages={89--92},
}

\bib{holtplesken}{book}{
  author={Holt, Derek},
  author={Plesken, Wilhelm},
  title={Perfect groups},
  publisher={Oxford Math. Monogr., Oxford Univ. Press, New York},
  date={1989},
}

\bib{jst}{article}{
  author={Jamneshan, Asgar},
  author={Shalom, Or},
  author={Tao, Terence},
 title = {The structure of totally disconnected {Host}--{Kra}--{Ziegler} factors, and the inverse theorem for the ${U}^k$ {Gowers} uniformity norms on finite abelian groups of bounded torsion},
 year = {2023},
 note = {Preprint, arXiv:2303.04860},
 url = {https://arxiv.org/abs/2303.04860},
}

\bib{jt}{article}{
  author={Jamneshan, Asgar},
  author={Tao, Terence},
  title={The inverse theorem for the $U^3$ Gowers uniformity norm on arbitrary finite abelian groups: Fourier-analytic and ergodic approaches},
  journal={Discrete Anal.},
  year={2023},
  pages={Paper No. 11, 48},
}

\bib{MR693352}{article}{
  author={Kazhdan, David},
  title={On $\varepsilon$-representations},
  journal={Israel J. Math.},
  volume={43},
  date={1982},
  number={4},
  pages={315--323},
}

\bib{MR1608723}{article}{
  author={Leibman, Alexander},
  title={Polynomial sequences in groups},
  journal={J. Algebra},
  volume={201},
  date={1998},
  number={1},
  pages={189--206},
}

\bib{MR1910931}{article}{
  author={Leibman, Alexander},
  title={Polynomial mappings of groups},
  journal={Israel J. Math.},
  volume={129},
  date={2002},
  pages={29--60},
}

\bib{leng}{article}{
 author = {Leng, James},
 author = {Sah, Ashwin},
 author = {Sawhney, Mehtaab},
 title = {Quasipolynomial bounds on the inverse theorem for the {Gowers} ${U}^{s+1}[{N}]$-norm},
 year = {2024},
 note = {Preprint, arXiv:2402.17994},
 url = {https://arxiv.org/abs/2402.17994},
}

\bib{ore}{article}{
  author={Liebeck, Martin W.},
  author={O'Brien, E. A.},
  author={Shalev, Aner},
  author={Tiep, Pham Huu},
  title={The Ore conjecture},
  journal={J. Eur. Math. Soc. (JEMS)},
  volume={12},
  date={2010},
  number={4},
  pages={939--1008},
}

\bib{luka}{article}{
 author = {Mili{\'c}evi{\'c}, Luka},
 title = {Quasipolynomial inverse theorem for the $U^4(\mathbb{F}_p^n)$ norm},
 year = {2024},
 note = {Preprint, arXiv:2410.08966},
 url = {https://arxiv.org/abs/2410.08966},
}

\bib{nikolov}{article}{
  author={Nikolov, Nikolay},
  title={On the commutator width of perfect groups},
  journal={Bull. London Math. Soc.},
  volume={36},
  year={2004},
  pages={30--36},
}

\bib{tz1}{article}{
  author={Tao, Terence},
  author={Ziegler, Tamar},
  title={The inverse conjecture for the Gowers norm over finite fields via the correspondence principle},
  journal={Anal. PDE},
  volume={3},
  year={2010},
  number={1},
  pages={1--20},
}

\bib{tz2}{article}{
  author={Tao, Terence},
  author={Ziegler, Tamar},
  title={The inverse conjecture for the Gowers norm over finite fields in low characteristic},
  journal={Ann. Comb.},
  volume={16},
  year={2012},
  number={1},
  pages={121--188},
}

\bib{wilson}{article}{
  author={Wilson, John},
  title={First-order group theory},
  conference={
    title={Infinite groups 1994 (Ravello)},
  },
  book={
    publisher={de Gruyter, Berlin},
  },
  date={1996},
  pages={301--314},
}

\end{biblist}
\end{bibdiv}

\end{document}